\documentclass[11pt]{article}


\usepackage[margin=1in]{geometry}  
\usepackage{graphicx}              
\usepackage{amsmath}               
\usepackage{amsfonts}              
\usepackage{amsthm}                

\usepackage{amssymb}
\usepackage{amscd} 
\usepackage{epsfig}


\newtheorem{thm}{Theorem}[section]
\newtheorem{lem}[thm]{Lemma}
\newtheorem{prop}[thm]{Proposition}
\newtheorem{cor}[thm]{Corollary}

\newtheorem{hyp}{Hypothesis}
\newtheorem{rem}{Remark}

\newcommand{\RR}{\mathbb{R}}      

\numberwithin{equation}{section}

\begin{document}

\nocite{*}

\title{Small mass asymptotics of a charged particle in a variable magnetic field}

\author{Jong Jun Lee  \\
Department of Mathematics\\
University of Maryland\\
College Park, Maryland 20742-4015}

\date{}

\maketitle

\begin{abstract}
We consider small mass asymptotics of the motion of a charged particle in a noisy force field combined with a variable magnetic field. The Smoluchowski-Kramers approximation does not hold in this case. We show that after a regularization of noise, a Smoluchowski-Kramers type approximation works.
\end{abstract}

\section{Introduction}

Consider a charged particle of mass $\mu>0$ moving on a plane. Let the position of this particle at time $t$ be $q^{\mu}_t \in \mathbb{R}^2$. We may express the force field with random noise on the plane as
\begin{equation*}
b(q^{\mu}_t)+\sigma(q^{\mu}_t)\dot{w}_t,
\end{equation*}
where $b : \RR^2 \rightarrow \RR^2$ is a vector-valued function, $\sigma : \RR^2 \rightarrow M_2(\RR)$ is a matrix-valued function, and $w_t \in \RR^2$ is a two dimensional Wiener process. 

Now, suppose that the motion of the particle is subject to a variable magnetic field perpendicular to the plane. The force on the particle due to this magnetic field can be expressed as
\begin{equation}\label{neq01}
A(q^{\mu}_t) \,\dot{q}^{\mu}_t=\alpha(q^{\mu}_t)A_0\,\dot{q}^{\mu}_t,
\end{equation} 
where $\alpha : \RR^2 \rightarrow \RR^+$ is a positive real-valued function and
\begin{equation*}
A_0 :=\left(\begin{array}{ccc} 0 & -1 \\
1 & 0 \end{array} \right).
\end{equation*}

The motion of this particle is governed by the Newton law, so that 
\begin{equation}\label{nneq01}
\left\{ \begin{array}{ll} \mu \, \ddot{q}^{\mu}_t
=b
(q^{\mu}_t)+A(q^{\mu}_t) \dot{q}^{\mu}_t+\sigma(q^{\mu}_t)
\, \dot{w}_t
\\\\
q^{\mu}_0=q_0  \in \mathbb{R}^2 ,\quad
\dot{q}^{\mu}_0=p_0 \in \mathbb{R}^2.
\end{array} \right.
\end{equation}

Notice that equation~\eqref{nneq01} can be rewritten as a system of first order equations
\begin{equation}\label{eq1}
\left\{ \begin{array}{ll}
\dot{q}^{\mu}_t=p^{\mu}_t
\\\\
\dot{p}^{\mu}_t =\frac{1}{\mu} b
(q^{\mu}_t)+\frac{1}{\mu} A(q^{\mu}_t)
\, p^{\mu}_t+\frac{1}{\mu} \sigma(q^{\mu}_t) \,
\dot{w}_t
\\\\
q^{\mu}_0=q_0  \in \mathbb{R}^2 ,\quad p^{\mu}_0=p_0 \in \mathbb{R}^2.
\end{array} \right.
\end{equation}

Now, let  $q_t$ be the solution of the following first order SDE with $\mu=0$ from equation~\eqref{nneq01}:
\begin{equation*}
\left\{ \begin{array}{ll} \dot{q}_t=-A^{-1}(q_t)b (q_t)-A^{-1}(q_t)\sigma(q_t) \dot{w}_t
\\\\
q_0 \in \mathbb{R}^2.
\end{array} \right.
\end{equation*}

It is natural to consider the convergence of $q^{\mu}_t$ to $q_t$ as $\mu \downarrow 0$. The Smoluchowski-Kramers approximation tells us that in the case of the Langevin equation, that is, in the case of
\begin{equation*}
A(q)\equiv-cI
\end{equation*}
for a constant $c>0$, we may replace $q^{\mu}_t$ with $q_t$ for small $\mu$ due to the convergence
\begin{equation*}
\lim_{\mu \downarrow 0} E \max_{0 \leq t
\leq T}|q^{\mu}_t-q_t| = 0
\end{equation*}
(\cite{F01}). However, this convergence is not satisfied for general $A(q)$. Even in the case of $A(q)$ being a constant matrix, if the real parts of the eigenvalues of $A(q)$ are nonnegative, the Smoluchowski-Kramers approximation does not hold. For example, for $A(q)\equiv A_0$, the above convegence fails due to 
\begin{equation*}
\lim_{\mu \downarrow 0} \int_0^t \sin \left(\frac{s}{\mu}\right)dw_s \neq 0
\end{equation*} 
(\cite{CF01}). So we cannot use this approximation in our case. Nonetheless, we may regularize the problem and check a convergence similar to the Smoluchowski-Kramers approximation (\cite{CF01}, \cite{FH01}).

Firstly, it is physically reasonable to introduce a small friction proportional to the velocity. We may write $A_{\epsilon}(q)=A(q)-\epsilon \,I$ and approximate $q^{\mu}_t$ with $q^{\mu,\epsilon}_t$, the solution of the following SDE
\begin{equation*}
\left\{ \begin{array}{ll}
\dot{q}^{\mu,\epsilon}_t=p^{\mu,\epsilon}_t
\\\\
\dot{p}^{\mu,\epsilon}_t =\frac{1}{\mu} b
(q^{\mu,\epsilon}_t)+\frac{1}{\mu} A_{\epsilon}(q^{\mu,\epsilon}_t)
\, p^{\mu,\epsilon}_t+\frac{1}{\mu} \sigma(q^{\mu,\epsilon}_t) \,
\dot{w}_t .
\end{array} \right.
\end{equation*}

This small friction term makes the real parts of the eigenvalues of $A_{\epsilon}(q)$ negative and gives us an exponential decay of the term 
\begin{equation*}
\frac{1}{\mu}\,\exp \left(\frac{1}{\mu}\int_0^t \,A_{\epsilon}(q^{\mu,\epsilon}_s)\, ds\right)=\frac{1}{\mu}\exp \left (-\frac{\epsilon}{\mu}t\right)\left(\begin{array}{ccc}
\cos \left(\frac{1}{\mu}\int_0^t\alpha(q^{\mu, \epsilon}_s)ds\right) & -\sin \left(\frac{1}{\mu}\int_0^t\alpha(q^{\mu, \epsilon}_s)ds\right) \\
\sin \left(\frac{1}{\mu}\int_0^t\alpha(q^{\mu, \epsilon}_s)ds\right) & \cos \left(\frac{1}{\mu}\int_0^t\alpha(q^{\mu, \epsilon}_s)ds\right)
\end{array} \right)
\end{equation*}
as $\mu \downarrow 0$. However, it turns out that this approximation does not support us with enough regularity for the convergence of the system. This follows from 
\[
\lim_{\mu \downarrow 0} \int_0^t \frac{1}{\mu}
\exp \left (-\frac{2\epsilon}{\mu}s\right)\left|\int_0^s
\exp \left (\frac{\epsilon}{\mu}r\right) dw_r\right|^2
ds \neq 0.
\]

As another regularization method, we may approximate the Wiener process $w_t$ with a $\delta$-correlated smooth process $w^{\delta}_t$ as in \cite[Example 7.3 Chapter VI]{IW01}. In section 2, we will prove that as $\mu \downarrow 0$ and $\delta \downarrow 0$ in the way that  $\mu e^{\frac{C}{\delta^2}} \downarrow 0$ for each constant $C>0$ , the solution $q^{\mu,\delta}_t$ of approximated second order equation \eqref{eq2} converges to the solution $\hat{q}_t$ of first order SDE \eqref{neq04} in the sense that
\begin{equation*}
\lim_{\mu \downarrow 0, \delta \downarrow 0, \mu e^{\frac{C}{\delta^2}} \downarrow 0}E \max_{0 \leq t
\leq T}\left|q^{\mu,\delta}_t-\hat{q}_t\right| = 0.
\end{equation*}
In section 3, we consider an application of this approximation, a homogenization problem.

Throughout the present paper we shall use $|\cdot|$ as the standard Euclidean norm in $\mathbb{R}^n$ and $|\cdot|_{\infty}$ as the supremum norm in spaces of functions. Moreover, symbols $C$ and $C_i$'s will indicate arbitrary large positive constants. $C$ and $C_i$'s may take different values in different places.

\section{Small mass asymptotics under the regularized Wiener process}

We define $w^{\delta}_t$ as a mollification of the Wiener process $w_t$ as in \cite [Example 7.3 Chapter VI]{IW01}.

\begin{equation}\label{eq4}
w^{\delta}_t=\frac{1}{\delta}\int_0^{\infty} w(s)\rho(\frac{s-t}{\delta})ds,
\end{equation}
where $\rho(t) \geq 0$ is smooth, has the support in $[0,1]$, and satisfies
\begin{equation*}
\int_0^1 \rho(s) ds =1.
\end{equation*}

$w^{\delta}_t$ is a smooth approximation of $w_t$ satisfying
\begin{equation*}
\lim_{\delta \downarrow 0} E \max_{0 \leq t \leq T}|w^{\delta}_t-w_t|^2 = 0.
\end{equation*}

Now, we rewrite \eqref{eq1} with $w^{\delta}_t$ in place of $w_t$
\begin{equation}\label{eq2}
\left\{ \begin{array}{ll}
\dot{q}^{\mu,\delta}_t=p^{\mu,\delta}_t
\\\\
\mu \, \dot{p}^{\mu,\delta}_t =b
(q^{\mu,\delta}_t)+A(q^{\mu,\delta}_t)
\, p^{\mu,\delta}_t+\sigma(q^{\mu,\delta}_t) \,
\dot{w}^{\delta}_t
\\\\
q^{\mu}_0=q_0  \in \mathbb{R}^2 ,\quad p^{\mu}_0=p_0 \in \mathbb{R}^2.
\end{array} \right.
\end{equation}

To give enough regularity for the problem, we assume the following conditions on $b(q)$, $\sigma(q)$, and $\alpha(q)$.

\begin{hyp}\label{hyp01}\ 
\begin{enumerate}
\item $b : \RR^2 \rightarrow \RR^2$ and $\sigma : \RR^2 \rightarrow M_2(\RR)$ are differentiable and bounded with their derivatives.
\item $\alpha : \RR^2 \rightarrow \RR$ is differentiable and bounded with its derivative. Moreover, 
\begin{equation*}
\inf_{q \in \RR^2} \alpha(q) = \alpha_0>0.
\end{equation*}
\end{enumerate}
\end{hyp}

Under Hypothesis~\ref{hyp01}, we have the relation
\begin{equation*}
\lim_{\delta \downarrow 0} E \max_{0 \leq t \leq T}|q^{\mu,\delta}_t-q^{\mu}_t|^2 = 0
\end{equation*}
thanks to \cite[Theorem 7.2 Chapter VI]{IW01} and the following property stemming from the continuous differentiability of $q^{\mu}_t$ in $t$:

\begin{equation*}
\int_0^T \sigma(q^{\mu}_t) \, dw_t= \int_0^T \sigma(q^{\mu}_t) \circ dw_t,
\end{equation*}
where the integral on the right is understood in the Stratonovich sense.

What will happen if we tend $\mu \downarrow 0$  faster than $\delta \downarrow 0$? It turns out that with the smooth noise $w^{\delta}_t$, we have enough regularity to find the limit. This follows from the fact that for regular enough $f$,
\begin{equation*}
\lim_{\mu \downarrow 0}\int_0^t \cos \left( \frac{1}{\mu}\int_0^s \alpha(q^{\mu,\delta}_r ) \, dr \right)
f(q^{\mu,\delta}_s)ds=0.
\end{equation*}

We will discuss on this property in Lemma~\ref{th03}. Now, we are ready to state the main theorems.

\begin{thm}\label{th05} Under Hypothesis~\ref{hyp01}, there exists a constant $C>0$ such that for any $0 <\mu\le 1$ and $0<\delta\leq1$,
\begin{equation}
E \max_{0 \leq t \leq T}|q^{\mu,\delta}_t-q^{\delta}_t| \leq C
(1+T)^{\frac{11}{2}}\exp \left(\frac{C}{\delta^2}(1+T)^3\right)\mu,
\end{equation} 
where $q^{\delta}_t$ is the solution of the first order
differential equation
\begin{align}\label{eq19}
\left\{ \begin{array}{ll} \dot{q}^{\delta}_t=-A^{-1}(q^{\delta}_t) \,  b (q^{\delta}_t)+A^{-1}(q^{\delta}_t)\, \sigma(q^{\delta}_t) \, \dot{w}^{\delta}_t
\\\\
q^{\delta}_0=q_0 \in \mathbb{R}^2.
\end{array} \right.
\end{align}

In particular, for any fixed $0<\delta \leq 1$, 
\begin{equation*}
\lim_{\mu \downarrow 0}E\max_{0 \leq t \leq
T}|q^{\mu,\delta}_t-q^{\delta}_t|=0.
\end{equation*}

\end{thm}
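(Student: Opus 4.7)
Setting $\mu = 0$ formally in the second line of \eqref{eq2} gives $p = -A^{-1}(b + \sigma \dot w^\delta)$, whose integral in $t$ agrees with the right side of \eqref{eq19} (up to its sign convention). The plan is to make this rigorous by solving the $p$-equation via variation of constants, and then using integration by parts to separate a ``slow'' part matching the limit from an oscillatory $O(\mu)$ remainder. The key algebraic fact I will exploit is that $A(q) = \alpha(q) A_0$ is always a scalar multiple of the \emph{same} skew-symmetric matrix $A_0$, so the family $\{A(q^{\mu,\delta}_s)\}_s$ commutes in $s$ and the fundamental solution of the linearization of the $p$-equation is the explicit rotation $R(\theta) := e^{\theta A_0}$.

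\textbf{Main computation.} I will set $\theta^{\mu,\delta}_t := \mu^{-1}\int_0^t \alpha(q^{\mu,\delta}_s)\,ds$ and $F(q) := b(q) + \sigma(q)\dot w^\delta$, so that variation of constants yields
\[
p^{\mu,\delta}_t = R(\theta^{\mu,\delta}_t) p_0 + \frac{1}{\mu}\int_0^t R(\theta^{\mu,\delta}_t - \theta^{\mu,\delta}_s) F(q^{\mu,\delta}_s)\,ds.
\]
Using the identity $\mu^{-1} R(\theta^{\mu,\delta}_t - \theta^{\mu,\delta}_s) = \alpha(q^{\mu,\delta}_s)^{-1} A_0 \frac{d}{ds} R(\theta^{\mu,\delta}_t - \theta^{\mu,\delta}_s)$ (which uses $A_0^{-1} = -A_0$ and $A_0^2 = -I$), I will integrate by parts in $s$ to obtain
\[
p^{\mu,\delta}_t = -A^{-1}(q^{\mu,\delta}_t) F(q^{\mu,\delta}_t) + \mathcal{E}^{\mu,\delta}_t,
\]
where $\mathcal{E}^{\mu,\delta}_t$ consists of a boundary term proportional to $R(\theta^{\mu,\delta}_t)$ plus an oscillatory integral with integrand $R(\theta^{\mu,\delta}_t - \theta^{\mu,\delta}_s) \cdot \frac{d}{ds}\bigl[A^{-1}(q^{\mu,\delta}_s) F(q^{\mu,\delta}_s)\bigr]$.

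\textbf{Gronwall and expectation.} Subtracting $\dot q^\delta_t = -A^{-1}(q^\delta_t) F(q^\delta_t)$ and integrating in $t$ will split the difference $q^{\mu,\delta}_t - q^\delta_t$ into a Lipschitz-type piece bounded by $L \int_0^t |q^{\mu,\delta}_s - q^\delta_s|\,ds$ (using Hypothesis~\ref{hyp01}) plus $\int_0^t \mathcal{E}^{\mu,\delta}_s\,ds$. For the latter I will perform a second integration by parts in $t$, again using that $R(\theta^{\mu,\delta}_t)$ is generated by $\mu^{-1}\alpha(q^{\mu,\delta}_t) A_0$; this extracts an explicit factor of $\mu$ from the boundary term and, after swapping the order of integration in the double integral, from the inner integral $\int_s^T R(\theta^{\mu,\delta}_t - \theta^{\mu,\delta}_s)\,dt$. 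What remains are integrands involving $p^{\mu,\delta}$, $\dot w^\delta$ (bounded pathwise by $C\delta^{-1}\|w\|_{C^0[0,T+1]}$), and $\ddot w^\delta$ (bounded by $C\delta^{-2}\|w\|_{C^0[0,T+1]}$). Gronwall's inequality will then give a pathwise bound $\sup_{[0,T]}|q^{\mu,\delta}_t - q^\delta_t| \le \mu K$ with $K$ polynomial in $\|w\|_{C^0[0,T+1]}$ and exponential in $(1+T)^3 \delta^{-2}$; taking expectation and using Gaussian tails of $\|w\|_{C^0[0,T+1]}$ will produce the stated prefactor $(1+T)^{11/2}\exp\bigl(C(1+T)^3/\delta^2\bigr)$.

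\textbf{Main obstacle.} The delicate step will be the self-referential a priori control of $\sup_t |p^{\mu,\delta}_t|$: the remainder $\mathcal{E}^{\mu,\delta}_t$ involves $\frac{d}{ds} A^{-1}(q^{\mu,\delta}_s)$, which by the chain rule contains the factor $\nabla\alpha(q^{\mu,\delta}_s) \cdot p^{\mu,\delta}_s$, so $p^{\mu,\delta}_s$ feeds back into the estimate for $p^{\mu,\delta}_t$ itself. Closing this will require a joint Gronwall estimate on $|p^{\mu,\delta}|$ and $|q^{\mu,\delta}-q^\delta|$, and it is precisely the combination of this self-reference with the $\delta^{-2}$-growth of $\ddot w^\delta$ that produces the $\exp(C(1+T)^3/\delta^2)$ Gronwall constant appearing in the final bound.
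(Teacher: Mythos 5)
Your proposal is correct and follows the same overall skeleton as the paper's proof: variation of constants for $p^{\mu,\delta}_t$ using the explicit rotation $e^{\theta A_0}$ (possible precisely because $A(q)=\alpha(q)A_0$ with a fixed $A_0$), integration by parts to isolate the slow part $-A^{-1}(q)\bigl(b(q)+\sigma(q)\dot w^{\delta}\bigr)$, an a priori Gronwall bound on $\sup_{t\le T}|p^{\mu,\delta}_t|$ (the paper's Lemma~\ref{th01}) to close exactly the self-reference you flag, a pathwise Gronwall estimate on $|q^{\mu,\delta}_t-q^{\delta}_t|$, and finally Gaussian moment bounds (Lemma~\ref{th04}). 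The one genuine methodological difference is how the residual oscillatory integrals are shown to be $O(\mu)$: you propose a second integration by parts in $t$ together with Fubini, whereas the paper proves a separate Riemann--Lebesgue-type lemma (Lemma~\ref{th03}) via the substitution $u=\beta^{\mu,\delta}_s/\mu$ and exact cancellation of $\cos u$ over full periods, using only the $\mu$-uniform Lipschitz continuity of $s\mapsto q^{\mu,\delta}_s$ from Remark~\ref{th02}. The two devices are interchangeable here and require the same inputs; the paper's version has the minor advantage of not differentiating $\dot w^{\delta}$ again at that stage (the $\ddot w^{\delta}$ term is confined to Lemma~\ref{th01}), while yours keeps the whole argument in one computation. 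One correction to your bookkeeping: the pathwise Gronwall constant $K$ is necessarily \emph{exponential}, not polynomial, in $X_T=\max_{0\le s\le T+1}|w_s|$, because the Lipschitz coefficient multiplying $\int_0^t|q^{\mu,\delta}_s-q^{\delta}_s|\,ds$ contains $\dot w^{\delta}$ and hence a factor $C\delta^{-1}X_T$; the $\exp\bigl(C(1+T)^3/\delta^2\bigr)$ in the final statement then arises from the Gaussian moment generating function $E\bigl(e^{aX_T}\bigr)\le Ce^{2(1+T)a^2}$ evaluated at $a=C(1+T)/\delta$, rather than directly from the $\delta^{-2}$ size of $\ddot w^{\delta}$.
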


We postpone the proof of Theorem~\ref{th05} to the end of this section. By \cite[Theorem 7.2 Chapter VI]{IW01}, we have the following result.

\begin{thm}\label{th06} Under Hypothesis~\ref{hyp01},
\begin{equation*}
\lim_{\delta \downarrow 0} E \max_{0 \leq t \leq T}
|q^{\delta}_t-\hat{q}_t|=0,
\end{equation*}
where $\hat{q}_t$ is the solution of the first order stochastic differential equation
\begin{equation}\label{neq04}
\left\{ \begin{array}{ll}
\dot{\hat{q}}_t=-A^{-1}(\hat{q}_t)b(\hat{q}_t)-(A^{-1}(\hat{q}_t)\sigma(\hat{q}_t))
 \circ \dot{w}_t
\\\\
\hat{q}_0=q_0  \in \mathbb{R}^2.
\end{array} \right.
\end{equation}

\qed

\end{thm}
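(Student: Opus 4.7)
The plan is to recognize \eqref{eq19} as a classical random ODE driven by the smooth process $w^{\delta}_{t}$ whose coefficients are sufficiently regular, and to invoke the Wong--Zakai type convergence result given in Theorem 7.2 of Chapter VI of \cite{IW01}, as the author already signals. No genuinely new analysis is required; the work is to verify the hypotheses of that theorem.

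First I would make the inversion of $A(q)$ explicit. Since $A(q)=\alpha(q)A_{0}$ with $A_{0}^{2}=-I$, one has
\begin{equation*}
A^{-1}(q) = -\frac{1}{\alpha(q)}\,A_{0}.
\end{equation*}
Under Hypothesis~\ref{hyp01}, $\alpha$ is differentiable with bounded derivative and is bounded below by $\alpha_{0}>0$, so $1/\alpha(q)$ is itself bounded, $C^{1}$, with bounded derivative. Multiplying by the bounded, $C^{1}$ functions $b(q)$ and $\sigma(q)$ (again via Hypothesis~\ref{hyp01}), the composite coefficients $-A^{-1}(q)b(q)$ and $-A^{-1}(q)\sigma(q)$ are bounded, globally Lipschitz, and $C^{1}$ with bounded derivatives. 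This is the regularity assumed by the Wong--Zakai theorem.

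Second, the process $w^{\delta}_{t}$ defined in \eqref{eq4} is precisely the mollification considered in \cite[Example 7.3 Chapter VI]{IW01}: it is a smooth, $\delta$-correlated approximation of $w_{t}$ whose piecewise-regular paths drive the ODE \eqref{eq19} in the ordinary (Stieltjes) sense. Applying \cite[Theorem 7.2 Chapter VI]{IW01} with the coefficients identified above, the solution $q^{\delta}_{t}$ of \eqref{eq19} converges, in the sense
\begin{equation*}
\lim_{\delta \downarrow 0} E \max_{0 \leq t \leq T}|q^{\delta}_{t}-\hat{q}_{t}| = 0,
\end{equation*}
to the solution $\hat{q}_{t}$ of the Stratonovich SDE obtained by replacing $\dot w^{\delta}_{t}$ with $\circ\, \dot w_{t}$, which is exactly \eqref{neq04}.

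The only point requiring any attention is the algebraic/regularity verification that $A^{-1}$ inherits the smoothness and boundedness of $\alpha$, which is where the strict positivity $\alpha_{0}>0$ in Hypothesis~\ref{hyp01} is essential. Once this is noted, the statement is an immediate corollary of the cited Wong--Zakai result, so there is no substantial analytic obstacle.
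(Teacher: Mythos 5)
Your proposal matches the paper exactly: the paper gives no proof of Theorem~\ref{th06} beyond the one-line citation of \cite[Theorem 7.2 Chapter VI]{IW01}, which is precisely the Wong--Zakai result you invoke, and your verification that $A^{-1}(q)=-\alpha(q)^{-1}A_0$ inherits boundedness and regularity from Hypothesis~\ref{hyp01} is the (implicit) hypothesis check the author omits. No further comment is needed.
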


We state the combination of the above two theorems in the following corollary.

\begin{cor}\label{th07} Under Hypothesis~\ref{hyp01}, $q^{\mu,\delta}_t$ converges to $\hat{q}_t$ in probability in $C([0,T];\RR^2)$ as $\mu \downarrow 0$ and $\delta \downarrow 0$ so that $\mu e^{\frac{C}{\delta^2}} \downarrow 0$ for each constant $C>0$.

\qed

\end{cor}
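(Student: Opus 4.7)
The plan is to combine Theorem~\ref{th05} and Theorem~\ref{th06} via the triangle inequality and then upgrade $L^1$ convergence of the uniform norm to convergence in probability in $C([0,T];\RR^2)$. Concretely, I would start with the elementary bound
\begin{equation*}
E\max_{0\leq t\leq T}\bigl|q^{\mu,\delta}_t-\hat{q}_t\bigr|
\leq E\max_{0\leq t\leq T}\bigl|q^{\mu,\delta}_t-q^{\delta}_t\bigr|
+ E\max_{0\leq t\leq T}\bigl|q^{\delta}_t-\hat{q}_t\bigr|,
\end{equation*}
and analyze the two summands separately.

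For the first summand, Theorem~\ref{th05} gives a constant $C_0>0$, independent of $\mu,\delta\in(0,1]$, with
\begin{equation*}
E\max_{0\leq t\leq T}\bigl|q^{\mu,\delta}_t-q^{\delta}_t\bigr|
\leq C_0(1+T)^{11/2}\exp\!\left(\tfrac{C_0(1+T)^3}{\delta^2}\right)\mu.
\end{equation*}
Since $T$ is fixed, I set $C_1:=C_0(1+T)^3$, so the right-hand side is a constant multiple of $\mu\,e^{C_1/\delta^2}$. The hypothesis of the corollary, that $\mu\, e^{C/\delta^2}\downarrow 0$ for every $C>0$, is tailor-made to kill this term along the joint limit. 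For the second summand, Theorem~\ref{th06} gives $E\max_{0\leq t\leq T}|q^{\delta}_t-\hat{q}_t|\to 0$ as $\delta\downarrow 0$, independently of $\mu$.

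Combining the two estimates yields
\begin{equation*}
\lim_{\substack{\mu,\delta\downarrow 0\\ \mu e^{C/\delta^2}\downarrow 0\ \forall C>0}}
E\max_{0\leq t\leq T}\bigl|q^{\mu,\delta}_t-\hat{q}_t\bigr|=0,
\end{equation*}
i.e.\ $\|q^{\mu,\delta}-\hat{q}\|_{C([0,T];\RR^2)}\to 0$ in $L^1(\Omega)$. Convergence in $L^1$ implies convergence in probability via Markov's inequality: for any $\eta>0$,
\begin{equation*}
P\!\left(\max_{0\leq t\leq T}\bigl|q^{\mu,\delta}_t-\hat{q}_t\bigr|>\eta\right)
\leq \frac{1}{\eta}\,E\max_{0\leq t\leq T}\bigl|q^{\mu,\delta}_t-\hat{q}_t\bigr|\to 0,
\end{equation*}
which is exactly convergence in probability in $C([0,T];\RR^2)$ equipped with the sup norm.

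There is no real obstacle here; the only thing to notice is that the scaling condition $\mu e^{C/\delta^2}\downarrow 0$ must be strong enough to absorb the specific exponential factor produced by Theorem~\ref{th05}, which is why the condition is formulated to hold for \emph{every} $C>0$ rather than a single fixed one. All other steps are routine.
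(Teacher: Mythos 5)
Your proposal is correct and is essentially the paper's own argument: the corollary is stated as the immediate combination of Theorem~\ref{th05} and Theorem~\ref{th06} via the triangle inequality, with the condition $\mu e^{C/\delta^2}\downarrow 0$ present precisely to absorb the exponential factor from Theorem~\ref{th05}, and the passage from $L^1$ to convergence in probability by Markov's inequality is the standard final step.
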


For the proof of Theorem~\ref{th05}, it is necessary to find some auxiliary bounds. In the following three lemmas, we find those bounds. 

First of all, in Lemma~\ref{th01}, we find a uniform bound of $|p^{\mu,\delta}_t|$ in $C([0,T];\RR^2)$ independent of $\mu$.

\begin{lem}\label{th01} Under Hypothesis~\ref{hyp01}, there exists a constant $C>0$ such that  for any $0 < \delta \leq 1$,
\begin{equation}
\sup_{\mu > 0}\max_{0\leq t \leq T}|p^{\mu,\delta}_t|\leq C(1+T)(1+X_T)\exp \left(\frac{C}{\delta}(1+T)(1+X_T) \right), \quad \mathbb{P}-a.s.,
\end{equation}
where
\begin{equation}\label{eq16}
X_T=\max_{0 \le t \le T+1}|w_t|.
\end{equation}

\end{lem}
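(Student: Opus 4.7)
The plan is to represent $p^{\mu,\delta}_t$ explicitly by variation of parameters and exploit the antisymmetry of $A_0$ twice: once to show the associated flow is a rotation (hence an isometry on $\RR^2$), and once to cancel the singular prefactor $1/\mu$ via integration by parts. Since $A(q)=\alpha(q)A_0$ with $A_0$ antisymmetric and $\alpha\geq\alpha_0>0$, the fundamental matrix
\[
\Phi^{\mu,\delta}_{s,t}:=\exp\!\left(\frac{1}{\mu}\Bigl(\int_s^t\alpha(q^{\mu,\delta}_r)\,dr\Bigr)A_0\right)
\]
is a rotation, so $|\Phi^{\mu,\delta}_{s,t}v|=|v|$ for every $v\in\RR^2$. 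Variation of parameters applied to the linear equation for $p^{\mu,\delta}$ in \eqref{eq2} gives
\[
p^{\mu,\delta}_t=\Phi^{\mu,\delta}_{0,t}p_0+\frac{1}{\mu}\int_0^t\Phi^{\mu,\delta}_{s,t}b(q^{\mu,\delta}_s)\,ds+\frac{1}{\mu}\int_0^t\Phi^{\mu,\delta}_{s,t}\sigma(q^{\mu,\delta}_s)\dot w^{\delta}_s\,ds,
\]
where the first term is bounded by $|p_0|$, while the remaining two carry the divergent prefactor $1/\mu$ that must be removed.

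The key identity comes from differentiating $\Phi^{\mu,\delta}_{s,t}$ in $s$: $(d/ds)\Phi^{\mu,\delta}_{s,t}=-(\alpha(q^{\mu,\delta}_s)/\mu)\Phi^{\mu,\delta}_{s,t}A_0$. Using $A_0^{-1}=-A_0$ and $\alpha\geq\alpha_0$, this rearranges to
\[
\frac{1}{\mu}\Phi^{\mu,\delta}_{s,t}=\frac{1}{\alpha(q^{\mu,\delta}_s)}\,\frac{d\Phi^{\mu,\delta}_{s,t}}{ds}\,A_0,
\]
turning the singular $1/\mu$ into a total $s$-derivative. Integration by parts on each of the two integrals produces a boundary contribution plus an integral of the isometry $\Phi^{\mu,\delta}_{s,t}$ against the derivative of the remaining smooth factor. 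Using $\dot q^{\mu,\delta}_s=p^{\mu,\delta}_s$ together with Hypothesis~\ref{hyp01}, the derivative of $A_0 b(q^{\mu,\delta}_s)/\alpha(q^{\mu,\delta}_s)$ in $s$ is bounded by $C|p^{\mu,\delta}_s|$, so the deterministic integral contributes at most $C+C\int_0^t|p^{\mu,\delta}_s|\,ds$.

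For the stochastic-forcing integral we additionally need noise bounds. Direct differentiation of the mollification, together with the compact support of $\rho$, yields $|\dot w^{\delta}_t|\leq CX_T/\delta$ and $|\ddot w^{\delta}_t|\leq CX_T/\delta^2$ on $[0,T]$. The IBP derivative of $A_0\sigma(q^{\mu,\delta}_s)\dot w^{\delta}_s/\alpha(q^{\mu,\delta}_s)$ splits into contributions bounded by $C|p^{\mu,\delta}_s|X_T/\delta$ (from differentiating $1/\alpha$ and $\sigma$) and $CX_T/\delta^2$ (from differentiating $\dot w^{\delta}_s$), producing a total stochastic contribution at most $CX_T/\delta+CTX_T/\delta^2+(CX_T/\delta)\int_0^t|p^{\mu,\delta}_s|\,ds$. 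Collecting all contributions,
\[
|p^{\mu,\delta}_t|\leq\frac{C(1+T)(1+X_T)}{\delta^2}+\frac{C(1+X_T)}{\delta}\int_0^t|p^{\mu,\delta}_s|\,ds,
\]
and Gronwall's inequality gives $|p^{\mu,\delta}_t|\leq (C(1+T)(1+X_T)/\delta^2)\exp(C(1+X_T)T/\delta)$. Since $1/\delta^2\leq e^{2/\delta}$ for $\delta\in(0,1]$, the polynomial prefactor is absorbed into the exponential (at the cost of a larger constant) to produce the claimed bound uniformly in $\mu>0$.

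The main obstacle is the IBP on the stochastic-forcing term: differentiating the integrand inevitably yields $\ddot w^{\delta}_s$, which carries the worst $1/\delta^2$ growth. The saving grace is that this growth enters only as a polynomial prefactor (not inside the Gronwall exponent), and the rotation-isometry property of $\Phi^{\mu,\delta}_{s,t}$ is what prevents any $1/\mu$ from appearing in the final exponent.
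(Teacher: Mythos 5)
Your proposal is correct and follows essentially the same route as the paper: variation of parameters with the rotation (hence isometry) $\exp\left(\frac{\beta^{\mu,\delta}_t}{\mu}A_0\right)$, integration by parts using $A_0^{-1}=-A_0$ to cancel the $1/\mu$ prefactor, the mollifier bounds $|\dot w^{\delta}_t|\leq CX_T/\delta$ and $|\ddot w^{\delta}_t|\leq CX_T/\delta^2$, Gronwall, and absorption of the polynomial in $1/\delta$ into the exponential.
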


\begin{proof}

Suppose $0\leq t \leq T$. From equation \eqref{eq2},
\[
\dot{p}^{\mu,\delta}_t-\frac{1}{\mu}\,A(q^{\mu,\delta}_t)
\, p^{\mu,\delta}_t =\frac{1}{\mu}\, b
(q^{\mu,\delta}_t)+\frac{1}{\mu}\,\sigma(q^{\mu,\delta}_t)
\,
\dot{w}^{\delta}_t.
\]

Multiplying both sides by
\[
\exp \left(-\frac{1}{\mu}\int_0^t A(q^{\mu,\delta}_s) \, ds \right),
\]
we get
\begin{align*}
&\left( \exp \left(-\frac{1}{\mu}\int_0^t
\,A(q^{\mu,\delta}_s) \, ds \right)\,
p^{\mu,\delta}_t \right)'\\ 
&\quad\qquad\qquad\qquad=\frac{1}{\mu}\,
\exp \left(-\frac{1}{\mu}\int_0^t \,A(q^{\mu,\delta}_s)
\, ds\right)\, b
(q^{\mu,\delta}_t)+\frac{1}{\mu}\,\exp \left(-\frac{1}{\mu}\int_0^t
\,A(q^{\mu,\delta}_s) \, ds \right)\,
\sigma(q^{\mu,\delta}_t) \, \dot{w}^{\delta}_t.
\end{align*}

Define $\beta^{\mu,\delta}_t$ as 
\[
\beta^{\mu,\delta}_t := \int_0^t \alpha(q^{\mu,\delta}_s) \,
ds.
\]

Considering the definition of $A(q^{\mu,\delta}_t)$ in \eqref{neq01}, we  have
\[\int_0^t\,A(q^{\mu,\delta}_s)
\,ds= \int_0^t \alpha(q^{\mu,\delta}_s)\,
ds \,A_0 =\beta^{\mu,\delta}_t A_0.
\] 

So, we may rewrite the above equation as 
\[
\left(\exp \left(-\frac{\beta^{\mu,\delta}_t}{\mu} A_0\right)\,
p^{\mu,\delta}_t \right)' =\frac{1}{\mu}\,
\exp \left(-\frac{\beta^{\mu,\delta}_t}{\mu} A_0 \right)\, b
(q^{\mu,\delta}_t)+\frac{1}{\mu}\exp \left(-\frac{\beta^{\mu,\delta}_t}{\mu} A_0 \right)\,
\sigma(q^{\mu,\delta}_t) \, \dot{w}^{\delta}_t.
\]

Integrating both sides with respect to $t$ we get
\begin{align}\notag
p^{\mu,\delta}_t &=\exp \left(\frac{\beta^{\mu,\delta}_t}{\mu} A_0
\right)p_0+\frac{1}{\mu}\,\exp \left(\frac{\beta^{\mu,\delta}_t}{\mu} A_0 \right) \int_0^t
\exp \left(-\frac{\beta^{\mu,\delta}_s}{\mu}A_0 \right) b (q^{\mu,\delta}_s)\, ds\\
&\notag\qquad\qquad\qquad\qquad\qquad\qquad+\frac{1}{\mu}\,\exp \left(\frac{\beta^{\mu,\delta}_t}{\mu} A_0 \right)\int_0^t
 \exp \left(-\frac{\beta^{\mu,\delta}_s}{\mu}A_0\right)\, \sigma(q^{\mu,\delta}_s)
\, dw^{\delta}_s\\
&\label{eq10}=:I_1(t)+I_2(t)+I_3(t).
\end{align}

By the definition of $A_0$ in \eqref{neq01}, we can calculate the matrix exponentials
\begin{equation}\label{eq12}
\exp \left(\pm\frac{\beta^{\mu,\delta}_t}{\mu}A_0\right)=\left(\begin{array}{ccc}
\cos \left(\frac{\beta^{\mu,\delta}_t}{\mu}\right) & \mp\sin \left(\frac{\beta^{\mu,\delta}_t}{\mu}\right) \\
\pm\sin \left(\frac{\beta^{\mu,\delta}_t}{\mu}\right) & \cos
\left(\frac{\beta^{\mu,\delta}_t}{\mu}\right)
\end{array} \right).
\end{equation}

Since \eqref{eq12} is an orthogonal matrix, for any $v \in
\mathbb{R}^2$,
\begin{equation}\label{neq05}
\left|\exp \left(\pm\frac{\beta^{\mu,\delta}_t}{\mu}A_0\right)v\right|=|v|
\end{equation}
so that 
\begin{equation}\label{neq06}
|I_1(t)|\leq |p_0|.
\end{equation}

As $A_0$ and $A_0^{-1}$ commute, we have
\begin{align}
\notag I_2(t)&=\exp \left(\frac{\beta^{\mu,\delta}_t}{\mu} A_0\right) \int_0^t
\left(-\frac{\alpha(q^{\mu,\delta}_s)}{\mu}A_0
\exp \left(-\frac{\beta^{\mu,\delta}_s}{\mu}A_0\right)\right)\left(-\frac{1}{\alpha(q^{\mu,\delta}_s)}A^{-1}_0
b (q^{\mu,\delta}_s)\right)ds\\
&\notag =\exp \left(\frac{\beta^{\mu,\delta}_t}{\mu} A_0\right)
\left(\left[\exp \left(-\frac{\beta^{\mu,\delta}_s}{\mu}A_0\right)\left(-\frac{1}{\alpha(q^{\mu,\delta}_s)}A^{-1}_0
b (q^{\mu,\delta}_s)\right)\right]^t_0\right.\\
&\notag \qquad \quad \left.-\int_0^t
\exp \left(\frac{\beta^{\mu,\delta}_s}{\mu} A_0\right) \left(\frac{\nabla
\alpha(q^{\mu,\delta}_s) \cdot
p^{\mu,\delta}_s}{\alpha(q^{\mu,\delta}_s)^2}
A^{-1}_0b(q^{\mu,\delta}_s)-\frac{1}{\alpha(q^{\mu,\delta}_s)}A^{-1}_0
D b (q^{\mu,\delta}_s)
p^{\mu,\delta}_s\right) ds\right)\\
&\notag =\exp \left(\frac{\beta^{\mu,\delta}_t}{\mu} A_0\right)
\left(-\frac{1}{\alpha(q^{\mu,\delta}_t)}A^{-1}_0
\exp \left(-\frac{\beta^{\mu,\delta}_t}{\mu} A_0\right) b
(q^{\mu,\delta}_t)+\frac{1}{\alpha(q_0)}A^{-1}_0 b(q_0) \right.\\
& \notag \qquad \quad \left. -\int_0^t
\exp \left(-\frac{\beta^{\mu,\delta}_s}{\mu} A_0\right) \left(\frac{\nabla
\alpha(q^{\mu,\delta}_s) \cdot
p^{\mu,\delta}_s}{\alpha(q^{\mu,\delta}_s)^2}
A^{-1}_0b(q^{\mu,\delta}_s)-\frac{1}{\alpha(q^{\mu,\delta}_s)}A^{-1}_0
D b (q^{\mu,\delta}_s) p^{\mu,\delta}_s\right) ds\right)\\
&\notag =-\frac{1}{\alpha(q^{\mu,\delta}_t)}A^{-1}_0 b
(q^{\mu,\delta}_t)+\frac{1}{\alpha(q_0)}A^{-1}_0\exp \left(\frac{\beta^{\mu,\delta}_t}{\mu} A_0\right) b(q_0) \\
&\label{neq07} \qquad -\int_0^t
\exp \left(\frac{\beta^{\mu,\delta}_t-\beta^{\mu,\delta}_s}{\mu}A_0 \right)\left(\frac{\nabla \alpha(q^{\mu,\delta}_s) \cdot
p^{\mu,\delta}_s}{\alpha(q^{\mu,\delta}_s)^2}
A^{-1}_0b(q^{\mu,\delta}_s)-\frac{1}{\alpha(q^{\mu,\delta}_s)}A^{-1}_0
D b (q^{\mu,\delta}_s) p^{\mu,\delta}_s\right)ds.
\end{align}

The same method can be used for $ I_3(t)$ and we get
\begin{align}
\notag I_3(t)&=\frac{1}{\mu}\exp \left(\frac{\beta^{\mu,\delta}_t}{\mu} A_0\right)\int_0^t
 \exp \left(-\frac{\beta^{\mu,\delta}_s}{\mu} A_0\right) \sigma(q^{\mu,\delta}_s)
 \dot{w}^{\delta}_s ds\\
&\notag =-\frac{1}{\alpha(q^{\mu,\delta}_t)}A^{-1}_0 \sigma
(q^{\mu,\delta}_t)\dot{w}^{\delta}_t+\frac{1}{\alpha(q_0)}A^{-1}_0\exp \left(\frac{\beta^{\mu,\delta}_t}{\mu} A_0\right) \sigma(q_0)\dot{w}^{\delta}_0 \\
&\notag\qquad -\int_0^t
\exp \left(\frac{\beta^{\mu,\delta}_t-\beta^{\mu,\delta}_s}{\mu}A_0\right)
\left(\frac{\nabla \alpha(q^{\mu,\delta}_s) \cdot
p^{\mu,\delta}_s}{\alpha(q^{\mu,\delta}_s)^2} A^{-1}_0
\sigma(q^{\mu,\delta}_s)\dot{w}^{\delta}_s \right.\\
&\label{neq08} \qquad\qquad\qquad\qquad\qquad\qquad\qquad \left.-\frac{1}{\alpha(q^{\mu,\delta}_s)}A^{-1}_0
D \sigma (q^{\mu,\delta}_s)
p^{\mu,\delta}_s\dot{w}^{\delta}_s -\frac{1}{\alpha(q^{\mu,\delta}_s)}A^{-1}_0
\sigma (q^{\mu,\delta}_s)\ddot{w}^{\delta}_s \right) ds.
\end{align}

To find bounds for $I_2(t)$ and $I_3(t)$, we need bounds for $\dot{w}^{\delta}_t$ and $\ddot{w}^{\delta}_t$. In view of equation \eqref{eq4}, we note that $(w^{\delta}_t)^{(n)}$, the $n$th derivative of $w^{\delta}_t$ with respect to $t$, satisfies
\begin{align*}
(w^{\delta}_t)^{(n)}&=\frac{(-1)^n}{\delta^n}\int_0^{\infty} w(s)\rho^{(n)} (\frac{s-t}{\delta})ds=\frac{(-1)^n}{\delta^n}\int_0^1 w(t+\delta r ) \rho^{(n)} (r)dr.
\end{align*}

Hence, for any $0 \leq t \leq T$,
\begin{align*}
|(w^{\delta}_t)^{(n)}| &\leq \frac{1}{\delta^n}\int_0^1|w(t+\delta s)|
|\rho^{(n)}(s)| ds\\
&\leq \frac{1}{\delta^n}\max_{0\leq t \leq
T+\delta}|w(t)|\int_0^1 |\rho^{(n)} (s)| ds=\frac{C(n)}{\delta^n}\max_{0\leq t \leq T+1}|w(t)|,
\end{align*}
where $C(n)$ is a constant depending on $n$.

Letting
\[
X_T:=\max_{0\leq t \leq T+1}|w_t|,
\]
we have
\[
\max_{0 \leq t \leq T}|(w^{\delta}_t)^{(n)}| \leq \frac{C(n)}{\delta^n}X_T.
\]

In particular, we can find a constant $C>0$ such that 
\begin{equation}\label{eq29}
\max_{0 \leq t \leq T}|\dot{w}^{\delta}_t| \leq \frac{C}{\delta}X_T
\end{equation}
and
\begin{equation*}
 \max_{0 \leq t \leq T}|\ddot{w}^{\delta}_t|
\leq \frac{C}{\delta^2}X_T.
\end{equation*}

Now, we are ready to find bounds for $I_2(t)$ and $I_3(t)$. Applying Hypothesis~\ref{hyp01}, \eqref{neq05}, and \eqref{eq29} to \eqref{neq07} and  \eqref{neq08}, we get
\begin{align}
\notag |I_2(t)|& \leq \left|\frac{1}{\alpha(q^{\mu,\delta}_t)}A^{-1}_0 b
(q^{\mu,\delta}_t) \right|+\left|\frac{1}{\alpha(q_0)}A^{-1}_0\exp \left(\frac{\beta^{\mu,\delta}_t}{\mu}A_0\right) b(q_0) \right| \\
&\notag+\int_0^t
\left|\exp \left(\frac{\beta^{\mu,\delta}_t-\beta^{\mu,\delta}_s}{\mu}A_0\right)
\left(\frac{\nabla \alpha(q^{\mu,\delta}_s) \cdot
p^{\mu,\delta}_s}{\alpha(q^{\mu,\delta}_s)^2}
A^{-1}_0b(q^{\mu,\delta}_s)-\frac{1}{\alpha(q^{\mu,\delta}_s)}A^{-1}_0
D b (q^{\mu,\delta}_s) p^{\mu,\delta}_s\right)\right| ds\\
&\label{neq09}\leq C_1+C_2+ C_3 \int^t_0|p^{\mu,\delta}_s|ds+ C_4 \int^t_0|p^{\mu,\delta}_s|ds=C_5+C_6 \int^t_0|p^{\mu,\delta}_s |ds
\end{align}
and 
\begin{align}
\notag |I_3(t)|&\leq \left|\frac{1}{\alpha(q^{\mu,\delta}_t)}A^{-1}_0 \sigma
(q^{\mu,\delta}_t)\dot{w}^{\delta}_t \right|+\left|\frac{1}{\alpha(q)}A^{-1}_0\exp \left(\frac{\beta^{\mu,\delta}_t}{\mu}A_0\right)\sigma(q)\dot{w}^{\delta}_0 \right| \\
&\notag\quad +\int_0^t
\left|\exp \left(\frac{\beta^{\mu,\delta}_t-\beta^{\mu,\delta}_s}{\mu}A_0\right)
\left(\frac{\nabla \alpha(q^{\mu,\delta}_s) \cdot
p^{\mu,\delta}_s}{\alpha(q^{\mu,\delta}_s)^2} A^{-1}_0
\sigma(q^{\mu,\delta}_s)\dot{w}^{\delta}_s-\frac{1}{\alpha(q^{\mu,\delta}_s)}A^{-1}_0
D \sigma (q^{\mu,\delta}_s)
p^{\mu,\delta}_s\dot{w}^{\delta}_s \right.\right.\\
&\notag\qquad\qquad\qquad\qquad\qquad\qquad\qquad\qquad\qquad\qquad\qquad\qquad\qquad\quad\,\,\,\, \left.\left.+\frac{1}{\alpha(q^{\mu,\delta}_s)}A^{-1}_0
\sigma (q^{\mu,\delta}_s)\ddot{w}^{\delta}_s\right)\right| ds\\
&\notag\leq \frac{C_7}{\delta}X_T+\frac{C_8}{\delta}X_T+\frac{C_9}{\delta}X_T
\int^t_0|p^{\mu,\delta}_s|ds+\frac{C_{10}}{\delta}X_T
\int^t_0|p^{\mu,\delta}_s|ds+\frac{C_{11} t}{\delta^2}X_T\\
&\label{neq10}\leq \frac{C_{12}}{\delta^2}(1+t)X_T+\frac{C_{13}}{\delta}X_T
\int^t_0|p^{\mu,\delta}_s|ds.
\end{align}

Applying three inequalities \eqref{neq06},  \eqref{neq09}, and \eqref{neq10} to \eqref{eq10}, we get a bound for $p^{\mu,\delta}_t$.
\begin{align*}
|p^{\mu,\delta}_t|&\leq |I_1(t)|+|I_2(t)|+|I_3(t)|\\
&\leq |p|+C_5+C_6 \int^t_0|p^{\mu,\delta}_s|ds+\frac{C_{12}}{\delta^2}(1+t)X_T+\frac{C_{13}}{\delta}X_T
\int^t_0|p^{\mu,\delta}_s|ds\\
&\leq \frac{C_{14}}{\delta^2}(1+t)(1+X_T)+\frac{C_{15}}{\delta}(1+X_T)
\int^t_0|p^{\mu,\delta}_s|ds.
\end{align*}

By Gronwall's lemma,
\begin{align*}
|p^{\mu,\delta}_t|&\leq  \frac{C_{14}}{\delta^2}(1+t)(1+X_T)\exp \left(\frac{C_{15}}{\delta}(1+X_T)t\right)\\
&\leq C(1+t)(1+X_T)\exp \left(\frac{C}{\delta}(1+t)(1+X_T) \right)
\end{align*}
for sufficiently large $C>0$. The last inequality came from the fact that the term $\frac{1}{\delta^2}$ can be absorbed in the term $e^{\frac{C}{\delta}}$ for large $C$.

So, we have
\begin{equation*}
\max_{0\leq t \leq T}|p^{\mu,\delta}_t|\leq
C(1+T)(1+X_T)\exp \left(\frac{C}{\delta}(1+T)(1+X_T)\right).
\end{equation*}

\end{proof}

\begin{rem}\label{th02}
Note that by Lemma~\ref{th01},  $q^{\mu,\delta}_t$ is Lipschitz continuous with its Lipschitz constant independent of $\mu$ on the interval $[0,T]$. That is, for $0\leq t_1
\le t_2 \le T$,

\begin{equation}
|q^{\mu,\delta}_{t_2}-q^{\mu,\delta}_{t_1}|\leq
C(T,\delta,X_T)|t_2-t_1| \quad \mathbb{P}-a.s.
\end{equation}

\end{rem}

Next, we find a bound of the integral of a highly oscillating function. The result is similar to that of the Riemann-Lebesgue lemma.

\begin{lem}\label{th03} Under Hypothesis~\ref{hyp01}, there exists a constant $C>0$ such that for any $\mu>0$ and $0<\delta \le 1$, and for any bounded Lipschitz
continuous function $f : \RR^2 \rightarrow \RR$ with the Lipschitz constant $K_f$,
\begin{equation}
\label{eq27}\left|\int_0^t \cos \left(\frac{\beta^{\mu,\delta}_s}{\mu}\right)
f(q^{\mu,\delta}_s)ds \right|+\left|\int_0^t \sin \left(\frac{\beta^{\mu,\delta}_s}{\mu}\right)
f(q^{\mu,\delta}_s)ds\right| \leq C_1(t,\delta,X_t, f)\mu
\end{equation}
and
\begin{equation}
\label{eq28}\left|\int_0^t \cos \left(\frac{\beta^{\mu,\delta}_s}{\mu}\right)
f(q^{\mu,\delta}_s)\dot{w}^{\delta}_s ds\right|+\left|\int_0^t \sin
\left(\frac{\beta^{\mu,\delta}_s}{\mu}\right)
f(q^{\mu,\delta}_s)\dot{w}^{\delta}_s ds\right| \leq C_2(t,\delta,X_t,
f)\mu\\ 
\end{equation}
$\mathbb{P}-a.s.$, where
\begin{equation*} 
C_1(t,\delta,X_t,
f)=C(1+t)^2(|f|_{\infty}+K_f)(1+X_t)\exp \left( \frac{C}{\delta}(1+t)(1+X_t)\right)
\end{equation*}
and 
\begin{equation*}
C_2(t,\delta,X_t, f)=C
(1+t)^2(|f|_{\infty}+K_f)(1+X_t)^2\exp \left( \frac{C}{\delta}(1+t)(1+X_t)\right).
\end{equation*}

\end{lem}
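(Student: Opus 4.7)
The plan is a Riemann--Lebesgue--type integration by parts against the rapidly oscillating phase $\beta^{\mu,\delta}_s/\mu$. Since $\dot{\beta}^{\mu,\delta}_s=\alpha(q^{\mu,\delta}_s)\ge\alpha_0>0$ by Hypothesis~\ref{hyp01}, the identity
\[
\cos\!\left(\frac{\beta^{\mu,\delta}_s}{\mu}\right)=\frac{\mu}{\alpha(q^{\mu,\delta}_s)}\,\frac{d}{ds}\sin\!\left(\frac{\beta^{\mu,\delta}_s}{\mu}\right),
\]
and its analogue with $\sin$ and $-\cos$, rewrite each oscillatory integrand as a total derivative times a slowly varying weight. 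Integration by parts then peels off one factor of $\mu$ and leaves boundary terms of size $O(\mu)$ plus an error controlled by the derivative of the weight.

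For \eqref{eq27} this produces
\[
\int_0^t\cos\!\left(\frac{\beta^{\mu,\delta}_s}{\mu}\right)f(q^{\mu,\delta}_s)\,ds=\mu\!\left[\sin\!\left(\frac{\beta^{\mu,\delta}_s}{\mu}\right)\frac{f(q^{\mu,\delta}_s)}{\alpha(q^{\mu,\delta}_s)}\right]_0^{t}-\mu\int_0^t\sin\!\left(\frac{\beta^{\mu,\delta}_s}{\mu}\right)\frac{d}{ds}\!\left[\frac{f(q^{\mu,\delta}_s)}{\alpha(q^{\mu,\delta}_s)}\right]ds.
\]
The boundary contribution is bounded by $2\mu|f|_{\infty}/\alpha_{0}$. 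For the integrand, the chain rule combined with $\alpha\ge\alpha_{0}$, $|\nabla\alpha|_{\infty}<\infty$, and $|f(q^{\mu,\delta}_{s})-f(q^{\mu,\delta}_{r})|\le K_{f}|q^{\mu,\delta}_{s}-q^{\mu,\delta}_{r}|$ yields an a.e.\ bound
\[
\left|\frac{d}{ds}\!\left[\frac{f(q^{\mu,\delta}_{s})}{\alpha(q^{\mu,\delta}_{s})}\right]\right|\le C\,(|f|_{\infty}+K_{f})\,|p^{\mu,\delta}_{s}|.
\]
Substituting the uniform bound on $|p^{\mu,\delta}_{s}|$ from Lemma~\ref{th01} and integrating in $s$ produces exactly the factor $C_{1}(t,\delta,X_{t},f)$. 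The $\sin$-integral is handled identically.

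For \eqref{eq28} we integrate by parts in the same way but with the weight $f(q^{\mu,\delta}_{s})\dot{w}^{\delta}_{s}/\alpha(q^{\mu,\delta}_{s})$. Differentiating this weight now generates three terms: the two from before, each multiplied by $\dot{w}^{\delta}_{s}$, plus a $\ddot{w}^{\delta}_{s}$ contribution. Using \eqref{eq29} and the companion estimate $\max_{s}|\ddot{w}^{\delta}_{s}|\le CX_{T}/\delta^{2}$, the first two terms contribute $\mu(K_{f}+|f|_{\infty})\delta^{-1}X_{t}\int_{0}^{t}|p^{\mu,\delta}_{s}|\,ds$ and the boundary adds $\mu|f|_{\infty}\delta^{-1}X_{t}/\alpha_{0}$, while the $\ddot{w}^{\delta}$-term contributes $\mu|f|_{\infty}t\delta^{-2}X_{t}/\alpha_{0}$. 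Plugging Lemma~\ref{th01} once more, and absorbing the $1/\delta^{2}$ prefactor into $e^{C/\delta}$ as in the proof of that lemma, yields the announced $C_{2}(t,\delta,X_{t},f)$, whose extra power of $X_{t}$ precisely accounts for the derivatives of $w^{\delta}$.

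The main obstacle is regularity: $f$ is merely Lipschitz, so the composition $f\circ q^{\mu,\delta}$ is not \emph{a priori} $C^{1}$ in $s$. However, Remark~\ref{th02} asserts that $q^{\mu,\delta}$ is Lipschitz on $[0,T]$, so $f\circ q^{\mu,\delta}$ is Lipschitz in $s$, hence absolutely continuous, and the chain-rule pointwise bound $|(f\circ q^{\mu,\delta})'(s)|\le K_{f}|p^{\mu,\delta}_{s}|$ holds almost everywhere; this legitimates the integration by parts performed above in the Lebesgue sense. Everything else is straightforward bookkeeping of the constants appearing through Hypothesis~\ref{hyp01} and Lemma~\ref{th01}.
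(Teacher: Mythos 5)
Your proof is correct, and it takes a genuinely different route from the paper's. The paper first performs the change of variables $u=\beta^{\mu,\delta}_s/\mu$ to get $\mu\int_0^{\beta^{\mu,\delta}_t/\mu}\cos(u)\,g^{\mu,\delta}(u)\,du$, then splits this into full periods $[2\pi k,2\pi(k+1)]$ plus a tail, subtracts $g^{\mu,\delta}(2\pi k)$ inside each period (exploiting $\int_{2\pi k}^{2\pi(k+1)}\cos u\,du=0$), and bounds the resulting oscillations via the Lipschitz modulus of $g^{\mu,\delta}$ in the $u$-variable, which carries a factor of $\mu$. Your version replaces this discrete summation-by-parts over periods with a single integration by parts in $s$ against the total derivative $\frac{d}{ds}\sin(\beta^{\mu,\delta}_s/\mu)$. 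Both hinge on the same ingredients — the lower bound $\alpha\ge\alpha_0$ to invert $\dot{\beta}^{\mu,\delta}$, the $p^{\mu,\delta}$-bound from Lemma~\ref{th01}, and the $\dot w^{\delta}$/$\ddot w^{\delta}$ bounds — and both produce the same powers of $(1+t)$, $(1+X_t)$, and $\exp(C/\delta)$. Your approach avoids the change of variables and the period-by-period bookkeeping, so it is somewhat shorter; the cost is that you must justify differentiating $f\circ q^{\mu,\delta}$, whereas the paper only ever uses the Lipschitz modulus $|g^{\mu,\delta}(u)-g^{\mu,\delta}(2\pi k)|$ and never a pointwise derivative of $f$. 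You handle this correctly: $f\circ q^{\mu,\delta}$ is Lipschitz by Remark~\ref{th02}, hence absolutely continuous, so the a.e.\ chain rule bound $|(f\circ q^{\mu,\delta})'(s)|\le K_f|p^{\mu,\delta}_s|$ holds and the integration by parts is legitimate in the Lebesgue sense. The constant bookkeeping you sketch (boundary term $2\mu|f|_\infty/\alpha_0$, interior term via $\int_0^t|p^{\mu,\delta}_s|ds$, and for \eqref{eq28} the extra $\dot w^\delta,\ddot w^\delta$ pieces with $\delta^{-1},\delta^{-2}$ absorbed into $e^{C/\delta}$) reproduces $C_1$ and $C_2$ as stated.
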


\begin{proof}

Since $\alpha(q^{\mu,\delta}_s)$ is strictly positive,  $\beta^{\mu,\delta}_t$ is strictly increasing, so that
\begin{equation*}
u=\frac{\beta^{\mu,\delta}_s}{\mu}
\end{equation*}
provides a good change of variables.

Then, as
\begin{equation}\label{neq13}
du=\frac{\alpha(q^{\mu,\delta}_s)}{\mu}ds,
\end{equation}
we have
\begin{equation*}
\int_0^t \cos \left(\frac{\beta^{\mu,\delta}_s}{\mu}\right)\,f(q^{\mu,\delta}_s)ds=\mu\int_0^{\frac{\beta^{\mu,\delta}_t}{\mu}}
\cos (u)
\,\,\frac{f(q^{\mu,\delta}_{s(u)})}{\alpha(q^{\mu,\delta}_{s(u)})}du.
\end{equation*}

If we define
\begin{equation}\label{neq11}
 g^{\mu,\delta}(u):=\frac{f(q^{\mu,\delta}_{s(u)})}{\alpha(q^{\mu,\delta}_{s(u)})},
\end{equation}
we get
\begin{align}
\notag &\left| \int_0^t\cos \left(\frac{\beta^{\mu,\delta}_s}{\mu}\right)
\,f(q^{\mu,\delta}_s)ds \right|=\left|\mu\int_0^{\frac{\beta^{\mu,\delta}_t}{\mu}}
\cos (u)
\,g^{\mu,\delta}(u)du\right|\\
&\notag\qquad\qquad =\left|\mu \sum^{\left \lfloor \frac{\beta^{\mu,\delta}_t}{2\pi\mu}
\right \rfloor-1}_{k=0} \int^{2\pi (k+1)}_{2\pi k} \cos (u)
\,g^{\mu,\delta}(u)du+\mu\int^{\frac{\beta^{\mu,\delta}_t}{\mu}}_{2 \pi
\left \lfloor
\frac{\beta^{\mu,\delta}_t}{2\pi\mu} \right \rfloor}\cos (u)g^{\mu,\delta}(u) du \right|\\
&\notag\qquad\qquad \le\left|\mu \sum^{\left \lfloor \frac{\beta^{\mu,\delta}_t}{2\pi\mu}
\right \rfloor-1}_{k=0} \int^{2\pi (k+1)}_{2\pi k} \cos (u)
\,\left(\left(g^{\mu,\delta}(u)-g^{\mu,\delta}(2\pi k )\right)+g^{\mu,\delta}(2\pi k
)\right)du \right|\\
&\notag\qquad\qquad\qquad\qquad\qquad\qquad\qquad\qquad\qquad\qquad\qquad\qquad\qquad\qquad\quad+\left|\mu \int^{\frac{\beta^{\mu,\delta}_t}{\mu}}_{2 \pi \left \lfloor
\frac{\beta^{\mu,\delta}_t}{2\pi\mu} \right \rfloor}\cos (u)g^{\mu,\delta}(u) du\right|\\
&\notag\qquad\qquad =\left|\mu \sum^{\left \lfloor \frac{\beta^{\mu,\delta}_t}{2\pi\mu}
\right \rfloor-1}_{k=0} \int^{2\pi (k+1)}_{2\pi k} \cos (u)
\,\left(g^{\mu,\delta}(u)-g^{\mu,\delta}(2\pi k )\right)du\right|+\left|\mu
\int^{\frac{\beta^{\mu,\delta}_t}{\mu}}_{2 \pi \left \lfloor
\frac{\beta^{\mu,\delta}_t}{2\pi\mu} \right \rfloor} \cos (u)g^{\mu,\delta}(u) du\right|\\
&\label{eq15} \qquad\qquad  = |I_1(t)|+|I_2(t)|.
\end{align}

We first find a bound of $|I_1(t)|$. Note that from Hypothesis~\ref{hyp01}, 
\begin{equation*}
0<\alpha_0\leq \alpha(q) \leq |\alpha|_{\infty}.
\end{equation*}

So,
\begin{align}
&\notag\left|g^{\mu,\delta}(u)-g^{\mu,\delta}(2\pi k
)\right|=\left|\frac{f(q^{\mu,\delta}_{s(u)})}{\alpha(q^{\mu,\delta}_{s(u)})}-\frac{f(q^{\mu,\delta}_{s(2\pi
k)})}{\alpha(q^{\mu,\delta}_{s(2\pi
k)})}\right|\\
&\notag\qquad\qquad\qquad=\frac{1}{\alpha(q^{\mu,\delta}_{s(u)})\alpha(q^{\mu,\delta}_{s(2\pi
k)})}\left|f(q^{\mu,\delta}_{s(u)})\alpha(q^{\mu,\delta}_{s(2\pi
k)})-f(q^{\mu,\delta}_{s(2\pi k)})\alpha(q^{\mu,\delta}_{s(u)})\right|\\
&\notag\qquad\qquad\qquad\le
\frac{1}{\alpha_0^2}\left|f(q^{\mu,\delta}_{s(u)})\alpha(q^{\mu,\delta}_{s(2\pi
k)})-f(q^{\mu,\delta}_{s(2\pi k)})\alpha(q^{\mu,\delta}_{s(u)})\right|\\
&\notag\qquad\qquad\qquad\le
\frac{1}{\alpha_0^2}\left(\left|f(q^{\mu,\delta}_{s(u)})\alpha(q^{\mu,\delta}_{s(2\pi
k)})-f(q^{\mu,\delta}_{s(u)})\alpha(q^{\mu,\delta}_{s(u)})\right|+\left|f(q^{\mu,\delta}_{s(u)})\alpha(q^{\mu,\delta}_{s(u)})-f(q^{\mu,\delta}_{s(2\pi
k)})\alpha(q^{\mu,\delta}_{s(u)})\right|\right)\\
&\notag\qquad\qquad\qquad \leq
\frac{1}{\alpha_0^2}\left(|f|_{\infty}K\left|q^{\mu,\delta}_{s(u)}-q^{\mu,\delta}_{s(2\pi
k)}\right|+|\alpha|_{\infty} K_f\left|q^{\mu,\delta}_{s(u)}-q^{\mu,\delta}_{s(2\pi
k)}\right|\right)\\
&\label{neq12}\qquad\qquad\qquad \leq C_1(|f|_{\infty}+K_f)\left|q^{\mu,\delta}_{s(u)}-q^{\mu,\delta}_{s(2\pi k)}\right|,
\end{align}

where $K$ is the Lipschitz constant for $\alpha(q)$ and $K_f$ is
the Lipschitz constant for $f(q)$.\\

From Lemma~\ref{th01}, we have
\[
\label{eq30}\left|\frac{d}{du}q^{\mu,\delta}_{s(u)}\right|=\left|p^{\mu,\delta}_{s(u)} \frac{ds(u)}{du}\right|=\left|p^{\mu,\delta}_{s(u)} \frac{\mu}{\alpha(q^{\mu,\delta}_{s(u)})}\right| \leq C(t,\delta,X_t) \frac{\mu}{\alpha_0},
\]
where 
\begin{equation}\label{nnneq01}
C(t,\delta,X_t):=C(1+t)(1+X_t)\exp \left(\frac{C}{\delta}(1+t)(1+X_t) \right).
\end{equation}

So, from \eqref{neq12},
\begin{equation*}
|g^{\mu,\delta}(u)-g^{\mu,\delta}(2\pi k )| \leq
C_2C(t,\delta,X_t)(|f|_{\infty}+K_f)\mu |u - 2\pi k|.
\end{equation*}

This implies
\begin{align}
\notag |I_1(t)| &\leq \mu \sum^{\left \lfloor
\frac{\beta^{\mu,\delta}_t}{2\pi\mu} \right \rfloor-1}_{k=0} \int^{2\pi
(k+1)}_{2\pi k} |\cos (u)| |g^{\mu,\delta}(u)-g^{\mu,\delta}(2\pi k
)|du\\
&\notag \leq \mu \sum^{\left \lfloor \frac{\beta^{\mu,\delta}_t}{2\pi\mu}
\right \rfloor-1}_{k=0} \int^{2\pi (k+1)}_{2\pi k} C_2C(t,\delta,X_t)(|f|_{\infty}+K_f)\mu (u - 2\pi k) du\\
&\notag = \mu \sum^{\left \lfloor \frac{\beta^{\mu,\delta}_t}{2\pi\mu}
\right \rfloor-1}_{k=0} C_2C(t,\delta,X_t)(|f|_{\infty}+K_f)\mu 2 \pi^2\\
&\notag=
C_2C(t,\delta,X_t)(|f|_{\infty}+K_f) \mu^2 2 \pi^2 \left \lfloor
\frac{\beta^{\mu,\delta}_t}{2\pi\mu}
\right \rfloor.
\end{align}

Since 
\begin{equation*}
\beta^{\mu,\delta}_t \leq |\alpha|_{\infty} t,
\end{equation*}
we get
\begin{align}
\notag |I_1(t)| &\leq C_2C(t,\delta,X_t)(|f|_{\infty}+K_f) \mu  \pi
|\alpha|_{\infty} t \\
&\label{eq13}=C_3C(t,\delta,X_t)(|f|_{\infty}+K_f)t\mu.
\end{align}

A bound for $|I_2(t)|$ can be found relatively easily. From \eqref{neq11},
\begin{align}
\notag |I_2(t)| &\leq \mu \int^{\frac{\beta^{\mu,\delta}_t}{\mu}}_{2
\pi \left \lfloor \frac{\beta^{\mu,\delta}_t}{2\pi\mu}
\right \rfloor}\left|\frac{f(q^{\mu,\delta}_{s(u)})}{\alpha(q^{\mu,\delta}_{s(u)})}\right| du\\
&\notag \leq \mu
\int^{\frac{\beta^{\mu,\delta}_t}{\mu}}_{2 \pi \left \lfloor
\frac{\beta^{\mu,\delta}_t}{2\pi\mu}
\right \rfloor}\frac{|f|_{\infty}}{\alpha_0} du\\
&\label{eq14}\leq \mu 2 \pi
\frac{|f|_{\infty}}{\alpha_0}=C_4|f|_{\infty}\mu.
\end{align}

So, from \eqref{eq15},\eqref{eq13}, \eqref{eq14}, and \eqref{nnneq01}, 
\begin{align*}
\notag \left|\int_0^t \cos \left(\frac{\beta^{\mu,\delta}_s}{\mu}\right)
\,f(q^{\mu,\delta}_s)ds\right|&\leq
C_3C(t,\delta,X_t)(|f|_{\infty}+K_f)t\mu+C_4|f|_{\infty}\mu\\
&\leq
C_5(1+t)^2(|f|_{\infty}+K_f)(1+X_t)\exp \left( \frac{C_6}{\delta}(1+t)(1+X_t)\right)\mu.
\end{align*}

This proves inequality \eqref{eq27} for the cosine part. The sine part can be treated analogously. Now consider inequality \eqref{eq28}. As in \eqref{eq15},
\begin{align}
\notag \left|\int_0^t \cos \left(\frac{\beta^{\mu,\delta}_s}{\mu}\right)
f(q^{\mu,\delta}_s)\dot{w}^{\delta}_s ds \right|&\leq \left|\mu \sum^{\left \lfloor
\frac{\beta^{\mu,\delta}_t}{2\pi\mu} \right \rfloor-1}_{k=0} \int^{2\pi
(k+1)}_{2\pi k} \cos (u) \,\left(g^{\mu,\delta}_1(u)-g^{\mu,\delta}_1(2\pi
k )\right)du\right|\\
&\notag \qquad\qquad\qquad\qquad\qquad\qquad\qquad\qquad\qquad+\left|\mu \int^{\frac{\beta^{\mu,\delta}_t}{\mu}}_{2 \pi \left \lfloor
\frac{\beta^{\mu,\delta}_t}{2\pi\mu} \right \rfloor}g^{\mu,\delta}_1(u) du \right|\\
&\label{neq14}=|I_1(t)|+|I_2(t)|,
\end{align}
where
\begin{equation*}
g^{\mu,\delta}_1(u):=\frac{f(q^{\mu,\delta}_{s(u)})}{\alpha(q^{\mu,\delta}_{s(u)})}\dot{w}^{\delta}_{s(u)}.
\end{equation*}

By a similar argument as in \eqref{neq12}, we obtain
\begin{align*}
&\left|g^{\mu,\delta}_1(u)-g^{\mu,\delta}_1(2\pi k )\right|=\left|\frac{f(q^{\mu,\delta}_{s(u)})}{\alpha(q^{\mu,\delta}_{s(u)})}\dot{w}^{\delta}_{s(u)}-\frac{f(q^{\mu,\delta}_{s(2\pi k)})}{\alpha(q^{\mu,\delta}_{s(2\pi k)})}\dot{w}^{\delta}_{s(2\pi k)}\right|\\
& \qquad\qquad\qquad\leq
\frac{1}{\alpha_0^2}\left(|f|_{\infty}\max_{0 \leq s \leq
t}\left\{|\dot{w}^{\delta}_s|\right\}K\left|q^{\mu,\delta}_{s(u)}-q^{\mu,\delta}_{s(2\pi
k)}\right|+|\alpha|_{\infty} \max_{0 \leq s \leq
t}\left\{|\dot{w}^{\delta}_s|\right\}
K_f \left|q^{\mu,\delta}_{s(u)}-q^{\mu,\delta}_{s(2\pi
k)}\right|\right.\\
&\qquad\qquad\qquad\qquad\qquad\qquad\qquad\qquad\qquad\qquad\qquad\qquad+\left.|\alpha|_{\infty} |f|_{\infty}\max_{0 \leq s \leq
t}\left\{|\ddot{w}^{\delta}_s|\right\}|s(u)-s(2\pi k)|\right).
\end{align*}

Considering inequalities in \eqref{eq29} and Remark~\ref{th02},
\begin{align}
\notag &\left|g^{\mu,\delta}_1(u)-g^{\mu,\delta}_1(2\pi k )\right|\leq \frac{C_7}{\delta}(|f|_{\infty}+K_f)X_t\left|q^{\mu,\delta}_{s(u)}-q^{\mu,\delta}_{s(2\pi k)}\right|+\frac{C_8}{\delta^2}|f|_{\infty}X_t|s(u)-s(2\pi k)|\\
&\qquad\qquad\qquad\notag \le \frac{C_7}{\delta}(|f|_{\infty}+K_f)X_tC(t,\delta,X_t)|s(u)-s(2\pi k)|+\frac{C_8}{\delta^2}|f|_{\infty}X_t|s(u)-s(2\pi k)|\\
&\qquad\qquad\qquad\notag \leq \frac{C_9}{\delta^2}\left(\delta(|f|_{\infty}+K_f)C(t,\delta,X_t) + |f|_{\infty}\right)X_t|s(u)-s(2\pi k)|.
\end{align}

Note that from \eqref{neq13}
\begin{equation*}
|\frac{d}{du}s(u)|\leq\frac{\mu}{\alpha_0}.
\end{equation*}

Therefore, if we assume that $0 < \delta \leq 1$, we have
\begin{align}
\notag \left|g^{\mu,\delta}_1(u)-g^{\mu,\delta}_1(2\pi k )\right|&\leq \frac{C_9}{\delta^2}(|f|_{\infty}+K_f)C(t,\delta,X_t) +|f|_{\infty})X_t\frac{\mu}{\alpha_0}|u-2\pi k|\\
&\notag \leq  \frac{C_{10}}{\delta^2}(|f|_{\infty}+K_f)C(t,\delta,X_t)X_t\mu|u-2\pi k|.
\end{align}

By the same procedures as in \eqref{eq13} and \eqref{eq14},
\begin{equation*}
|I_1(t)| \leq  \frac{C_{10}}{\delta^2}(|f|_{\infty}+K_f)C(t,\delta,X_t)X_t t\mu
\end{equation*}
and
\begin{equation*}
|I_2(t)| \leq \frac{C_{11}}{\delta}|f|_{\infty}X_t\mu.
\end{equation*}

Now from \eqref{neq14}, we get
\begin{align*}
\notag \left|\int_0^t \cos \frac{\beta^{\mu,\delta}_s}{\mu}
\,f(q^{\mu,\delta}_s)\dot{w}^{\delta}_s ds\right|&\leq \frac{C_{10}}{\delta^2}(|f|_{\infty}+K_f)C(t,\delta,X_t)X_t t\mu+\frac{C_{11}}{\delta}|f|_{\infty}X_t\mu\\
&\leq C_{12}
(1+t)^2(|f|_{\infty}+K_f)(1+X_t)^2\exp \left(\frac{C_{13}}{\delta}(1+t)(1+X_t)\right)\mu.
\end{align*}

The last inequality was from \eqref{nnneq01} and the fact that $\frac{1}{\delta}$ or $\frac{1}{\delta^2}$ can be
absorved in $e^{\frac{c}{\delta}}$ in $C(t,\delta,X_t)$.

\end{proof}

In the next lemma, we show that the expectation of the exponential of the uniform norm of the two dimensional Wiener process in $C([0,t+1]; \RR^2)$ is finite. This property will be used at the end of the proof of the main theorem.

\begin{lem}\label{th04} For any integer $n \geq 0$ and $a \in \RR^+$, there exists a constant $C(n)>0$ such that 
\begin{equation}
E\left((1+X_t)^n e^{(1+X_t)a}\right) \leq
C(n)(1+t)^{\frac{n}{2}}e^{2(1+t)a^2+a},
\end{equation}
where
\begin{equation*}
X_t=\max_{0 \le s \le t+1}|w_s|.
\end{equation*}
\end{lem}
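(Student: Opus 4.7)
The plan is to decouple the $X_t$ and $a$ dependence inside the exponential by a quadratic-linear inequality, then estimate the remaining exponential moment using a Gaussian tail bound on $X_t$. For any $y, a \geq 0$, completing the square gives $ya \leq \frac{y^2}{8(1+t)} + 2(1+t) a^2$. Applying this with $y = X_t$ yields
\begin{equation*}
(1+X_t)^n e^{(1+X_t)a} \leq e^a \, e^{2(1+t) a^2} \, (1+X_t)^n \, e^{X_t^2/(8(1+t))},
\end{equation*}
so it suffices to prove the $a$-independent estimate
\begin{equation*}
E\bigl((1+X_t)^n e^{X_t^2/(8(1+t))}\bigr) \leq C(n)(1+t)^{n/2}.
\end{equation*}

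To control this residual expectation I would first derive a Gaussian tail bound for $X_t$. Writing $w_s = (w^{(1)}_s, w^{(2)}_s)$, the inclusion $\{|w_s| \geq x\} \subseteq \{|w^{(1)}_s| \geq x/\sqrt{2}\} \cup \{|w^{(2)}_s| \geq x/\sqrt{2}\}$ reduces the problem to one dimension; L\'evy's reflection principle then gives $P(\max_{0 \le s \le t+1} |w^{(i)}_s| \geq r) \leq 4 e^{-r^2/(2(t+1))}$, and a union bound yields $P(X_t \geq x) \leq 8\, e^{-x^2/(4(1+t))}$. I would then invoke the layer-cake identity $E(\phi(X_t)) = \phi(0) + \int_0^\infty \phi'(x) P(X_t \geq x)\, dx$ with $\phi(x) = (1+x)^n e^{x^2/(8(1+t))}$. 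Since the tail rate $1/(4(1+t))$ strictly dominates the integrand exponent $1/(8(1+t))$, after multiplying the derivative $\phi'(x)$ by the tail bound the integrand becomes a polynomial of degree at most $n$ in $x$ (with $(1+t)$-dependent coefficients) multiplied by $e^{-x^2/(8(1+t))}$. The substitution $y = x/\sqrt{1+t}$ renders the integral dimensionless, and a short calculation shows it contributes exactly a constant times $(1+t)^{n/2}$.

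The main obstacle is arranging the splitting so that the exponents match with room to spare: the $X_t^2$ coefficient produced by Young's inequality must be strictly smaller than the Gaussian tail rate $1/(4(1+t))$, because $E(\exp(c X_t^2))$ diverges once $c \geq 1/(4(1+t))$. The factor-of-two margin chosen above (the coefficient $1/(8(1+t))$ versus the tail rate $1/(4(1+t))$) is what makes the remaining Gaussian integral explicit and bounded, and the symmetric splitting $2(1+t)a^2$ is precisely what produces the $e^{2(1+t)a^2 + a}$ factor appearing in the stated bound.
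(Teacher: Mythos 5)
Your argument is correct, and it takes a genuinely different route from the paper. The paper first applies the Cauchy--Schwarz inequality to write $E\bigl((1+X_t)^n e^{(1+X_t)a}\bigr) \le C(n)e^a\sum_{k=0}^n \bigl(E(X_t^{2k})\bigr)^{1/2}\bigl(E(e^{2aX_t})\bigr)^{1/2}$ and then bounds the polynomial moments and the exponential moment separately, each via the reflection-principle tail bound and an explicit double-integral computation; in particular the $e^{2(1+t)a^2}$ factor emerges there from the estimate $E(e^{2aX_{1,t}})\le 4e^{2(t+1)a^2}$. You instead absorb the linear exponent by Young's inequality, $X_t a \le \tfrac{X_t^2}{8(1+t)}+2(1+t)a^2$, which produces the $e^{2(1+t)a^2+a}$ prefactor immediately and reduces everything to a single $a$-independent sub-Gaussian moment $E\bigl((1+X_t)^n e^{X_t^2/(8(1+t))}\bigr)$, handled by one layer-cake integral against the tail bound $P(X_t\ge x)\le 8e^{-x^2/(4(1+t))}$. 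Your version is more modular and makes transparent why the quadratic-in-$a$ constant has exactly the coefficient $2(1+t)$, and your remark about the necessary margin between the Young coefficient $1/(8(1+t))$ and the tail rate $1/(4(1+t))$ is exactly the right point to emphasize; the paper's version avoids exponentials of $X_t^2$ entirely at the cost of the extra Cauchy--Schwarz step. One tiny imprecision: after multiplying $\phi'(x)$ by the tail bound, the term $(1+x)^n\frac{x}{4(1+t)}e^{-x^2/(8(1+t))}$ has polynomial degree $n+1$ rather than $n$; the extra power of $x$ is compensated by the $1/(1+t)$ coefficient under the substitution $y=x/\sqrt{1+t}$, so the final scaling $(1+t)^{n/2}$ still holds, but you should state the degree correctly.
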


\begin{proof}

We have 
\begin{equation*}
E\left((1+X_t)^n e^{(1+X_t)a}\right)\leq C_1(n)e^{a} \sum_{k=0}^{n}E(X_t^{k}e^{a X_t}) \leq  C_1(n) e^{a}
\sum_{k=0}^{n}\left(E(X_t^{2k})\right)^{\frac{1}{2}}\left(E(e^{2a X_t})\right)^{\frac{1}{2}},
\end{equation*}
where $C_1(n)$ is a constant depending on $n$.

Since $w_s=(w^1_s, w^2_s)$, where $w^1_s$ and $w^2_s$ are independent one dimensional Wiener processes, defining
\begin{equation*}
 X_{i,t}:=\max_{0 \le s \le t+1}|w^i_s|
\end{equation*}
for $i=1, 2$, we have
\begin{equation*}
X_t\le X_{1,t}+X_{2,t}.
\end{equation*}

From
\begin{align*}
 E(X_t^{2k})&\leq E((X_{1,t}+X_{2,t})^{2k}) \leq C_2(n) E(X_{1,t}^{2k}+X_{2,t}^{2k})\\
&=2C_2(n)E(X_{1,t}^{2k})
\end{align*}
for $k=1,2,...,n$ and
\begin{equation*}
E(e^{2a X_t})\leq E(e^{2a( X_{1,t}+X_{2,t})})=E(e^{2a X_{1,t}})^2,
\end{equation*}
we get
\begin{equation}\label{eq18}
E\left((1+X_t)^n e^{(1+X_t)a}\right) \leq C_3(n)e^a\sum_{k=0}^{n}\left(E(X_{1,t}^{2k})\right)^{\frac{1}{2}}E(e^{2a X_{1,t}}).
\end{equation}

To find bounds for $E(X_{1,t}^{2k})$ or $E(e^{2a X_{1,t}})$, we need to know a bound for the distribution of $X_{1,t}$. We use the symmetry of the Wiener process and the reflection principle  to find this bound. 

For $x\geq 0$,
\begin{align}
\notag P(\max_{0\leq s \leq T}|w^1_s|>x)&= P(\{\max_{0\leq s \leq T}\{w^1_s\}>x\} \cup \{\min_{0\leq s \leq T} \{w^1_s\}<-x\})\\
&\notag \leq  P(\max_{0\leq s \leq T}\{w^1_s\}>x)+ P(\min_{0\leq s \leq T} \{w^1_s\}<-x)\\
&\notag =2P(\max_{0\leq s \leq T}\{w^1_s\}>x).
\end{align}

By the reflection principle, 
\[
P(\max_{0\leq s \leq T}|w^1_s|>x) \leq 4P(w^1_T > x)=4 \int_x^{\infty}\frac{1}{\sqrt{2\pi
T}}e^{-\frac{y^2}{2T}}dy.
\]

So, for $T=t+1$,
\begin{equation}\label{eq17}
P(X_t>x) \leq 4 \int_x^{\infty}\frac{1}{\sqrt{2\pi
(t+1)}}e^{-\frac{y^2}{2(t+1)}}dy.
\end{equation}

Using inequality \eqref{eq17},
\begin{align*}
E(e^{2a X_{1,t}})&=\int_0^{\infty} P(e^{2a X_{1,t}}>x)dx=\int_0^{\infty} P(X_{1,t}>\frac{1}{2a}\ln x)dx\\
&\leq \int_0^{\infty} 4 P(w^1_{t+1}>\frac{1}{2a}\ln x)dx=4
\int_0^{\infty}\int_{\frac{1}{2a}\ln
x}^{\infty}\frac{1}{\sqrt{2\pi
(t+1)}}e^{-\frac{y^2}{2(t+1)}}dy\,dx\\
&\leq 4e^{2(t+1)a^2}
\end{align*}

and
\begin{align*}
E(X_{1,t}^{2k}) &=\int_0^{\infty} P(X_{1,t}^{2k}>x)dx=\int_0^{\infty} P(X_{1,t}>x^{\frac{1}{2k}})dx\\
&\leq \int_0^{\infty} 4 P(w^1_{t+1}>x^{\frac{1}{2k}})dx=4\int_0^{\infty}\int_{x^{\frac{1}{2k}}}^{\infty}\frac{1}{\sqrt{2\pi
(t+1)}}e^{-\frac{y^2}{2(t+1)}}dy\,dx\\
&\leq C_4(n)(t+1)^k
\end{align*}
for $k=1,2,...,n$.

Applying these bounds to \eqref{eq18},
\begin{align*}
E\left((1+X_t)^n e^{(1+X_t)a}\right)&\leq C_5(n) e^a\sum_{k=0}^{n} (t+1)^{\frac{k}{2}}e^{2(t+1)a^2}\\
&\leq C_6(n) (t+1)^{\frac{n}{2}}e^{2(t+1)a^2+a}.
\end{align*}

The last inequality was from Young's inequality:
\begin{equation*}
\sum_{k=0}^{n} (t+1)^{\frac{k}{2}}\leq C_7(n)(1+(t+1)^{\frac{n}{2}}) \leq 2C_7(n)(t+1)^{\frac{n}{2}}.
\end{equation*}

\end{proof}

Finally, we are ready to prove the main theorem, Theorem~\ref{th05}.

\begin{proof}[Proof of Theorem~\ref{th05}]
	
Consider $0\leq t \leq T$. First, we find representations of $q^{\mu,\delta}_t$ and $q^{\delta}_t$. Integrating equations \eqref{eq10} and  \eqref{eq19},
\begin{align*}
\notag q^{\mu,\delta}_t=q_0+\int_0^t
\exp \left(\frac{\beta^{\mu,\delta}_s}{\mu} A_0 \right)p_0 \, ds
+&\frac{1}{\mu}\,\int_0^t
\exp \left(\frac{\beta^{\mu,\delta}_s}{\mu} A_0 \right)\int_0^s
\exp \left(-\frac{\beta^{\mu,\delta}_r}{\mu} A_0 \right) b (q^{\mu,\delta}_r)\, dr
\, ds\\
&+ \frac{1}{\mu}\,\int_0^t
\exp \left(\frac{\beta^{\mu,\delta}_s}{\mu} A_0 \right)\int_0^s
\exp \left(-\frac{\beta^{\mu,\delta}_r}{\mu} A_0 \right) \sigma(q^{\mu,\delta}_r)
\, dw^{\delta}_r \, ds
\end{align*}
and
\begin{align*}
\notag q^{\delta}_t&=q_0-\int_0^t A^{-1}(q^{\delta}_s) \,
b(q^{\delta}_s) \, ds - \int_0^t A^{-1}(q^{\delta}_s) \,
\sigma(q^{\delta}_s) \,
dw^{\delta}_s\\
&=q_0-\int_0^t \frac{1}{\alpha(q^{\delta}_s)}A^{-1}_0 \,
b(q^{\delta}_s) \, ds - \int_0^t
\frac{1}{\alpha(q^{\delta}_s)}A^{-1}_0 \, \sigma(q^{\delta}_s) \,
dw^{\delta}_s.
\end{align*}

Subtracting $q^{\delta}_t$ from $q^{\mu,\delta}_t$,
\begin{align}
\notag q^{\mu, \delta}_t-q^{\delta}_t&=\int_0^t \exp \left(\frac{\beta^{\mu,\delta}_s}{\mu} A_0 \right)p_0 \, ds\\
&\notag+\left(\frac{1}{\mu}\,\int_0^t
\exp \left(\frac{\beta^{\mu,\delta}_s}{\mu} A_0 \right)\int_0^s
\exp \left(-\frac{\beta^{\mu,\delta}_r}{\mu} A_0 \right) b (q^{\mu,\delta}_r)\, dr
\, ds+\int_0^t \frac{1}{\alpha(q^{\mu,\delta}_s)}A^{-1}_0 \,
b(q^{\mu,\delta}_s) \, ds\right)\\
&\notag+\left(\frac{1}{\mu}\,\int_0^t
\exp \left(\frac{\beta^{\mu,\delta}_s}{\mu} A_0 \right)\int_0^s
 \exp \left(-\frac{\beta^{\mu,\delta}_r}{\mu} A_0 \right) \sigma(q^{\mu,\delta}_r)
\, dw^{\delta}_r \, ds+\int_0^t
\frac{1}{\alpha(q^{\mu,\delta}_s)}A^{-1}_0 \,
\sigma(q^{\mu,\delta}_s) \, dw^{\delta}_s\right)\\
&\notag-\left(\int_0^t \frac{1}{\alpha(q^{\mu,\delta}_s)}A^{-1}_0
\, b(q^{\mu,\delta}_s) \, ds-\int_0^t
\frac{1}{\alpha(q^{\delta}_s)}A^{-1}_0 \,
b(q^{\delta}_s) \, ds\right)\\
&\notag-\left(\int_0^t \frac{1}{\alpha(q^{\mu,\delta}_s)}A^{-1}_0
\, \sigma(q^{\mu,\delta}_s) \, dw^{\delta}_s-\int_0^t
\frac{1}{\alpha(q^{\delta}_s)}A^{-1}_0 \,
\sigma(q^{\delta}_s) \, dw^{\delta}_s\right)\\
&\label{eq25}=I_1(t)+I_2(t)+I_3(t)+I_4(t)+I_5(t).
\end{align}

To get a bound for $q^{\mu, \delta}_t-q^{\delta}_t$, we will find bounds for the terms from $I_1(t)$ to $I_5(t)$. First, consider $I_1(t)$.

From \eqref{eq12}, expressing
$p_0=\left(\begin{array}{ccc} p^1_0 \\ p^2_0 \end{array} \right)$, we have

\begin{align*}
|I_1(t)|&=\left| \left(\begin{array}{ccc}
\int_0^t \cos (\frac{\beta^{\mu,\delta}_s}{\mu})ds \,p^1_0-\int_0^t \sin (\frac{\beta^{\mu,\delta}_s}{\mu})ds \,p^2_0 \\
\int_0^t \sin (\frac{\beta^{\mu,\delta}_s}{\mu})ds \,p^1_0+ \int_0^t
\cos(\frac{\beta^{\mu,\delta}_s}{\mu}) ds \,p^2_0
\end{array} \right)\right|\\
&\leq |p_0|\left(\left|\int_0^t \cos
(\frac{\beta^{\mu,\delta}_s}{\mu})ds\right|+\left|\int_0^t \sin
(\frac{\beta^{\mu,\delta}_s}{\mu})ds\right|\right)\\
&\leq C|p_0|(1+t)^2(1+X_t)e^{\frac{C}{\delta}(1+t)(1+X_t)}\mu.
\end{align*}

In the last inequality, we used Lemma~\ref{th03}.

Now, let's consider $I_2(t)$. Note that the commutativity of $A_0$ and
$A_0^{-1}$ justifies the commutativity of matrix exponentials. Applying integration by parts,
\begin{align}
\notag &\frac{1}{\mu}\,\int_0^t
\exp \left(\frac{\beta^{\mu,\delta}_s}{\mu} A_0 \right)\int_0^s
\exp \left(-\frac{\beta^{\mu,\delta}_r}{\mu} A_0 \right) b (q^{\mu,\delta}_r)\, dr
\, ds\\
&\notag=\int_0^t \frac{\alpha(q^{\mu,\delta}_s)}{\mu} A_0
\exp \left(\frac{\beta^{\mu,\delta}_s}{\mu} A_0 \right)
\frac{1}{\alpha(q^{\mu,\delta}_s)}A^{-1}_0 \int_0^s
\exp \left(-\frac{\beta^{\mu,\delta}_r}{\mu} A_0 \right) b (q^{\mu,\delta}_r)\, dr \, ds\\
\notag &=\left[\exp \left(\frac{\beta^{\mu,\delta}_s}{\mu} A_0 \right)
\frac{1}{\alpha(q^{\mu,\delta}_s)}A^{-1}_0 \int_0^s
\exp \left(-\frac{\beta^{\mu,\delta}_r}{\mu} A_0 \right) b (q^{\mu,\delta}_r)\, dr\right]^t_0\\
\notag &\quad -\int_0^t
\exp \left(\frac{\beta^{\mu,\delta}_s}{\mu} A_0 \right) \left(\frac{1}{\alpha(q^{\mu,\delta}_s)}\right)'A^{-1}_0\int_0^s
\exp \left(-\frac{\beta^{\mu,\delta}_r}{\mu} A_0 \right) b (q^{\mu,\delta}_r)\, dr
\, ds-\int_0^t \frac{1}{\alpha(q^{\mu,\delta}_s)}A^{-1}_0 b
(q^{\mu,\delta}_s)\, ds\\
\notag &=\exp \left(\frac{\beta^{\mu,\delta}_t}{\mu} A_0 \right)
\frac{1}{\alpha(q^{\mu,\delta}_t)}A^{-1}_0 \int_0^t
\exp \left(-\frac{\beta^{\mu,\delta}_s}{\mu} A_0 \right) b (q^{\mu,\delta}_s)\, ds\\
&\notag \qquad\qquad\qquad-\int_0^t
\exp \left(\frac{\beta^{\mu,\delta}_s}{\mu} A_0 \right)\frac{\nabla
\alpha(q^{\mu,\delta}_s)\cdot
p^{\mu,\delta}_s}{\alpha(q^{\mu,\delta}_s)^2}A^{-1}_0\int_0^s
\exp \left(-\frac{\beta^{\mu,\delta}_r}{\mu} A_0 \right) b (q^{\mu,\delta}_r)\, dr
\, ds\\
&\label{eq8} \qquad\qquad\qquad\qquad\qquad\qquad\quad\qquad\qquad\qquad\qquad\qquad\qquad\qquad-\int_0^t \frac{1}{\alpha(q^{\mu,\delta}_s)}A^{-1}_0 b
(q^{\mu,\delta}_s)\, ds.
\end{align}

This yields
\begin{align*}
\notag |I_2(t)|&=\left|\exp \left(\frac{\beta^{\mu,\delta}_t}{\mu} A_0 \right)
\frac{1}{\alpha(q^{\mu,\delta}_t)}A^{-1}_0 \int_0^t
\exp \left(-\frac{\beta^{\mu,\delta}_s}{\mu} A_0 \right) b (q^{\mu,\delta}_s)\,
ds\right.\\
&\notag \qquad \qquad \qquad \quad
\left.-\int_0^t \exp \left(\frac{\beta^{\mu,\delta}_s}{\mu} A_0 \right)\frac{\nabla
\alpha(q^{\mu,\delta}_s)\cdot
p^{\mu,\delta}_s}{\alpha(q^{\mu,\delta}_s)^2}A^{-1}_0\int_0^s
\exp \left(-\frac{\beta^{\mu,\delta}_r}{\mu} A_0 \right) b (q^{\mu,\delta}_r)\, dr
\, ds\right|\\
&\notag \leq \left|\exp \left(\frac{\beta^{\mu,\delta}_t}{\mu} A_0 \right)
\frac{1}{\alpha(q^{\mu,\delta}_t)}A^{-1}_0 \int_0^t
\exp \left(-\frac{\beta^{\mu,\delta}_s}{\mu} A_0 \right) b (q^{\mu,\delta}_s)\,
ds\right|\\
&\notag \qquad \qquad \qquad \quad
+\left|\int_0^t \exp \left(\frac{\beta^{\mu,\delta}_s}{\mu} A_0 \right)\frac{\nabla
\alpha(q^{\mu,\delta}_s)\cdot
p^{\mu,\delta}_s}{\alpha(q^{\mu,\delta}_s)^2}A^{-1}_0\int_0^s
\exp \left(-\frac{\beta^{\mu,\delta}_r}{\mu} A_0 \right) b (q^{\mu,\delta}_r)\, dr
\, ds\right|.
\end{align*}

Considering \eqref{neq05} and Hypothesis~\ref{hyp01},
\begin{align*}
\notag |I_2(t)|&\leq \frac{1}{\alpha_0}\left|\int_0^t
\exp \left(-\frac{\beta^{\mu,\delta}_s}{\mu} A_0 \right) b (q^{\mu,\delta}_s)\,
ds\right|+\int_0^t \frac{|\nabla \alpha|_{\infty}
}{\alpha_0^2}|p^{\mu,\delta}_s|\left|\int_0^s
\exp \left(-\frac{\beta^{\mu,\delta}_r}{\mu} A_0 \right)b (q^{\mu,\delta}_r)\, dr\right|
\, ds.
\end{align*}

Applying Lemma~\ref{th03},
\begin{align*}
\notag |I_2(t)|&\notag \leq
C_1(1+t)^2(1+X_t)\exp \left(\frac{C_2}{\delta}(1+t)(1+X_t)\right)\mu\\
&\qquad\qquad\qquad\qquad\qquad\qquad+\int_0^t|p^{\mu,\delta}_s|C_3(1+s)^2(1+X_s)\exp \left(\frac{C_4}{\delta}(1+s)(1+X_s)\right)\mu
\, ds\\
&\notag \leq
C_1(1+t)^2(1+X_t)\exp \left(\frac{C_2}{\delta}(1+t)(1+X_t)\right)\mu\\
&\qquad\qquad\qquad\qquad\qquad\qquad+C_3(1+t)^2(1+X_t)\exp \left(\frac{C_4}{\delta}(1+t)(1+X_t)\right)\mu\int_0^t
|p^{\mu,\delta}_s|\, ds.
\end{align*}

Note that by Lemma~\ref{th01},
\begin{align*}
\int_0^t |p^{\mu,\delta}_s|\, ds &\leq
C_5(1+t)(1+X_t)\exp \left(\frac{C_6}{\delta}(1+t)(1+X_t)\right) t\\
&\leq
C_5(1+t)^2(1+X_t)\exp \left(\frac{C_6}{\delta}(1+t)(1+X_t)\right)
\end{align*}
and so,
\begin{align*}
\notag |I_2(t)|&\leq
C_1(1+t)^2(1+X_t)\exp \left(\frac{C_2}{\delta}(1+t)(1+X_t)\right)\mu\\
&\qquad\qquad\qquad\qquad\qquad\qquad+C_6(1+t)^4(1+X_t)^2\exp \left(\frac{C_7}{\delta}(1+t)(1+X_t)\right)\mu\\
&\leq C(1+t)^4(1+X_t)^2\exp \left(\frac{C}{\delta}(1+t)(1+X_t)\right)\mu.
\end{align*}

We can apply a similar procedure as in getting the bound for $I_2(t)$ in the case of $I_3(t)$ and get the bound
\begin{equation*}
|I_3(t)|\leq C(1+t)^4(1+X_t)^3\exp \left(\frac{C}{\delta}(1+t)(1+X_t)\right)\mu.
\end{equation*}

Now, we find a bound for $I_4(t)$. From the expression of $I_4(t)$ in \eqref{eq25},
\begin{align*}
\notag |I_4(t)|&=\left|\int_0^t A^{-1}_0
\left(\frac{1}{\alpha(q^{\mu,\delta}_s)}
b(q^{\mu,\delta}_s)-\frac{1}{\alpha(q^{\delta}_s)}b(q^{\delta}_s)\right)ds\right|\\
\notag &=\left|\int_0^t A^{-1}_0
\left(\frac{b(q^{\mu,\delta}_s)\alpha(q^{\delta}_s)-b(q^{\delta}_s)\alpha(q^{\mu,\delta}_s)}{\alpha(q^{\delta}_s)\alpha(q^{\mu,\delta}_s)}
\right)ds\right|\\
\notag &\leq \int_0^t
\left|\frac{b(q^{\mu,\delta}_s)\alpha(q^{\delta}_s)-b(q^{\delta}_s)\alpha(q^{\mu,\delta}_s)}{\alpha(q^{\delta}_s)\alpha(q^{\mu,\delta}_s)}
\right|ds\\ 
&\leq \frac{1}{\alpha_0^2}\int_0^t
|b(q^{\mu,\delta}_s)\alpha(q^{\delta}_s)-b(q^{\delta}_s)\alpha(q^{\delta}_s)|+|b(q^{\delta}_s)\alpha(q^{\delta}_s)-b(q^{\delta}_s)\alpha(q^{\mu,\delta}_s)|ds\\
\notag &\leq \frac{1}{\alpha_0^2}\int_0^t
|\alpha|_{\infty}|b(q^{\mu,\delta}_s)-b(q^{\delta}_s)|+|b|_{\infty}|\alpha(q^{\delta}_s)-\alpha(q^{\mu,\delta}_s)|ds\\
\notag &\leq \frac{1}{\alpha_0^2}\int_0^t
|\alpha|_{\infty} K|q^{\mu,\delta}_s-q^{\delta}_s|+|b|_{\infty}K|q^{\delta}_s-q^{\mu,\delta}_s|ds\\
&\leq C\int_0^t |q^{\mu,\delta}_s-q^{\delta}_s|ds,
\end{align*}
where $K$ is the Lipschitz constant for both $b(q)$ and $\alpha(q)$.

By a similar method, a bound for $I_5(t)$ can be found also. We have
\begin{equation*}
|I_5(t)|\leq \frac{C}{\delta}X_t\int_0^t
|q^{\mu,\delta}_s-q^{\delta}_s|ds.
\end{equation*}

Combining these results and applying the bounds of $I_1(t)$ to $I_5(t)$ to \eqref{eq25}, we obtain
\begin{align*}
|q^{\mu,\delta}_t-q^{\delta}_t|&\leq
C|p_0|(1+t)^2(1+X_t)\exp \left(\frac{C}{\delta}(1+t)(1+X_t)\right)\mu\\
&\qquad\qquad\qquad\qquad+C(1+t)^4(1+X_t)^2\exp \left(\frac{C}{\delta}(1+t)(1+X_t)\right)\mu\\
&\qquad\qquad\qquad\qquad\qquad\qquad+C(1+t)^4(1+X_t)^3\exp \left(\frac{C}{\delta}(1+t)(1+X_t)\right)\mu\\
&\qquad\qquad\qquad\qquad\qquad\qquad\qquad\qquad+C\int_0^t|q^{\mu,\delta}_s-q^{\delta}_s|ds+\frac{C}{\delta}X_t\int_0^t |q^{\mu,\delta}_s-q^{\delta}_s|ds\\
&\leq
C(1+t)^4(1+X_t)^3\exp \left(\frac{C}{\delta}(1+t)(1+X_t)\right)\mu+\frac{C}{\delta}(1+X_t)\int_0^t
|q^{\mu,\delta}_s-q^{\delta}_s|ds.
\end{align*}

Then, from the Gronwall's lemma, we can conclude
\begin{align*}
|q^{\mu,\delta}_t-q^{\delta}_t|&\leq
C(1+t)^4(1+X_t)^3\exp \left(\frac{C}{\delta}(1+t)(1+X_t)\right)\mu
\exp \left(\frac{C}{\delta}(1+X_t)t\right)\\
&\leq
C(1+t)^4(1+X_t)^3\exp \left(\frac{C}{\delta}(1+t)(1+X_t)\right)\mu.
\end{align*}

This gives
\begin{equation*}
\max_{0 \leq t \leq T} |q^{\mu,\delta}_t-q^{\delta}_t|\leq
C(1+T)^4(1+X_T)^3\exp \left(\frac{C}{\delta}(1+T)(1+X_T)\right)\mu,
\end{equation*}

So that, by taking expectation and applying Lemma~\ref{th04},
\begin{align}
\notag E\max_{0 \leq t \leq T} |q^{\mu,\delta}_t-q^{\delta}_t|&\leq
E[C(1+T)^4(1+X_T)^3\exp \left(\frac{C}{\delta}(1+T)(1+X_T)\right)\mu]\\
&\notag \leq C(1+T)^4\mu
E[(1+X_T)^3\exp \left(\frac{C}{\delta}(1+T)(1+X_T)\right)]\\
&\notag \leq C(1+T)^4\mu
(1+T)^{\frac{3}{2}}\exp \left(2(T+1)\frac{C^2}{\delta^2}(1+T)^2+\frac{C}{\delta}(1+T)\right)\\
&\notag \leq C
(1+T)^{\frac{11}{2}}\exp \left(\frac{C}{\delta^2}(1+T)^3\right)\mu.
\end{align}

\end{proof}

\section{Homogenization}

In this section, we consider the case of a fast oscillating periodic magnetic field.\\ 
Consider the solution $q^{\mu,\delta,\epsilon}_t$ of
\begin{equation*}
\left\{ \begin{array}{ll} \mu \, \ddot{q}^{\mu,\delta,\epsilon}_t
=b(q^{\mu,\delta,\epsilon}_t)+\alpha \left({\frac{q^{\mu,\delta,\epsilon}_t}{\epsilon}}\right)A_0 \dot{q}^{\mu,\delta,\epsilon}_t+\dot{w}^{\delta}_t
\\\\
q^{\mu,\delta,\epsilon}_0=q_0  \in \mathbb{R}^2 ,\quad
\dot{q}^{\mu,\delta,\epsilon}_0=p_0 \in \mathbb{R}^2,
\end{array} \right.
\end{equation*}
where $\alpha : \RR^2 \rightarrow \RR$ is a 1-periodic function and $\epsilon >0$ is a constant. By periodicity of $\alpha$, we can consider the domain of $\alpha$ as $\mathbb{T}^2=\RR^2/\mathbb{Z}^2$, the two dimensional unit torus. In this case, a unique weak limit of the process $q^{\mu,\delta,\epsilon}_t$ as  $\mu \downarrow 0$, $\delta \downarrow 0$, and $\epsilon \downarrow 0$ in order exists and we find this limit by applying homogenization results in the literature ( \cite{F01},~\cite{F02},~\cite{FH01},~\cite{BOOKF01},~\cite{PBL01},~\cite{PV01} ) to our system. Note that we solve for $\sigma(q) \equiv I$ for computational convenience. In general, if $\sigma(q)\sigma(q)^*$ is positive definite for all $q \in \RR^2$, we can find a weak limit. For the proof of homogenization results, we need more restrictive assumptions than Hypothesis~\ref{hyp01}.

\begin{hyp}\label{hyp02}\ 
\begin{enumerate}
\item $b : \RR^2 \rightarrow \RR^2$ is twice continuously differentiable and bounded with its derivatives.
\item $\alpha : \RR^2 \rightarrow \RR$ is twice continuously differentiable and bounded with its derivatives. Moreover, 
\begin{equation*}
\inf_{q \in \RR^2} \alpha(q) = \alpha_0>0.
\end{equation*}
\end{enumerate}
\end{hyp}

\begin{prop} Under Hypothesis~\ref{hyp02}, $q^{\mu,\delta,\epsilon}_t$ converges to $\hat{q}_t$ weakly as $\mu \downarrow 0$, $\delta \downarrow 0$, and $\epsilon \downarrow 0$ in order, where $\hat{q}_t$ solves
\begin{equation*}
\left\{ \begin{array}{ll} \dot{\hat{q}}_t
=\hat{b}(\hat{q}_t)+\hat{\sigma}\dot{w}_t
\\\\
\hat{q}_0=q_0  \in \mathbb{R}^2.
\end{array} \right.
\end{equation*}
Here,
\[
\hat{b}(q)=\left(\frac{1}{\int_{\mathbb{T}^2}\alpha(q)dq}\int_{\mathbb{T}^2}\left( I-D\chi(q)\right)dq\,A_0\right)b(q)
\]
and
\[
\hat{\sigma}\hat{\sigma}^*=\frac{1}{\int_{\mathbb{T}^2}\alpha(q)dq}\int_{\mathbb{T}^2}\frac{1}{\alpha(q)}\left( I-D\chi(q)\right)\left( I-D\chi(q)\right)^*dq
\]
with $\chi(q)=(\chi^1(q),\chi^2(q))$ solving
\[
L \chi^i(q)=-\frac{1}{2\alpha^3(q)}\frac{\partial \alpha}{\partial q^i}(q),
\]
where $L$ is the operator
\[
L=\frac{1}{2}\frac{1}{\alpha^2(q)}\Delta_q-\frac{1}{2}\frac{\nabla \alpha(q)}{\alpha^3(q)} \cdot \nabla_q.
\]
\end{prop}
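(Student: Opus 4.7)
The plan is to take the three limits in the prescribed order: $\mu\downarrow 0$, $\delta\downarrow 0$, $\epsilon\downarrow 0$. The first two are supplied by the results already established, and the third is a classical periodic-homogenization problem.

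For each fixed $\epsilon>0$, $\tilde\alpha_\epsilon(q):=\alpha(q/\epsilon)$ inherits Hypothesis~\ref{hyp01} from Hypothesis~\ref{hyp02} (with the same lower bound $\alpha_0$). Hence Corollary~\ref{th07}, applied with $A(q)=\tilde\alpha_\epsilon(q)A_0$ and $\sigma\equiv I$, yields that $q^{\mu,\delta,\epsilon}_t$ converges in probability in $C([0,T];\RR^2)$, as $\mu\downarrow 0$ and $\delta\downarrow 0$ with $\mu e^{C/\delta^2}\downarrow 0$, to the unique solution $\hat q^\epsilon_t$ of the Stratonovich SDE
\begin{equation*}
d\hat q^\epsilon_t=-\frac{A_0^{-1}b(\hat q^\epsilon_t)}{\alpha(\hat q^\epsilon_t/\epsilon)}\,dt-\frac{A_0^{-1}}{\alpha(\hat q^\epsilon_t/\epsilon)}\circ dw_t.
\end{equation*}
It therefore suffices to prove $\hat q^\epsilon_t\Rightarrow\hat q_t$ as $\epsilon\downarrow 0$.

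Because $A_0^{-1}(A_0^{-1})^{\ast}=I$, a direct Stratonovich-to-It\^o computation puts the generator of $\hat q^\epsilon_t$ in divergence-plus-drift form:
\begin{equation*}
\mathcal L^\epsilon f=-\frac{A_0^{-1}b(q)}{\alpha(q/\epsilon)}\cdot\nabla f+\frac{1}{2\alpha(q/\epsilon)}\nabla\cdot\!\Bigl(\frac{1}{\alpha(q/\epsilon)}\nabla f\Bigr).
\end{equation*}
This is exactly the setting of the classical periodic-homogenization theorems of \cite{PBL01,PV01,BOOKF01}. A two-scale expansion $u^\epsilon(q)=u_0(q)+\epsilon u_1(q,q/\epsilon)+\epsilon^2 u_2(q,q/\epsilon)+\cdots$ produces at order $\epsilon^{-2}$ precisely the operator $L$ of the statement; its equivalent divergence form $Lf=\tfrac{1}{2\alpha(y)}\nabla_y\cdot\bigl(\alpha(y)^{-1}\nabla_y f\bigr)$ shows that $L$ is uniformly elliptic on $\mathbb T^2$ and symmetric in $L^2(\mathbb T^2,\alpha\,dy)$, with kernel the constants and unique invariant probability $\pi(dy)=\alpha(y)\,dy/\!\int_{\mathbb T^2}\alpha$. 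Hence $u_0$ depends only on $q$, and the $\epsilon^{-1}$ equation reduces to the cell problem stated in the proposition; solvability against $\pi$ is immediate from $\int\alpha^{-2}\partial_i\alpha\,dy=-\int\partial_i(\alpha^{-1})\,dy=0$, and one can take $u_1(q,y)=-\chi^i(y)\partial_{q_i}u_0(q)$ up to a function of $q$ alone.

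The $\epsilon^0$ Fredholm alternative then yields the homogenized generator $\bar{\mathcal L}u_0=\hat b\cdot\nabla u_0+\tfrac{1}{2}\hat a_{ij}\partial_i\partial_j u_0$. The drift simplifies to $\hat b(q)=\tfrac{1}{\int\alpha}A_0\,b(q)$ and matches the stated formula because $\int_{\mathbb T^2}D\chi\,dy=0$ by periodicity collapses $\int(I-D\chi)\,dy$ to $I$. The diffusion coefficient first emerges from direct integration as the asymmetric expression $\tfrac{1}{\int\alpha}\int_{\mathbb T^2}\alpha(y)^{-1}(\delta_{ij}-\partial_{y_i}\chi^j)\,dy$, and is identified with the symmetric quadratic form $\tfrac{1}{\int\alpha}\int_{\mathbb T^2}\alpha(y)^{-1}(I-D\chi)(I-D\chi)^{\ast}\,dy$ of the proposition by combining the Dirichlet-type energy identity $\int\chi^iL\chi^l\,\alpha\,dy=-\tfrac{1}{2}\int\alpha^{-1}\nabla\chi^i\cdot\nabla\chi^l\,dy$ with the cell equation. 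Weak convergence $\hat q^\epsilon\Rightarrow\hat q$ then follows by the standard martingale-problem argument: generator convergence on $C^2_c$ test functions together with tightness of $\{\hat q^\epsilon\}$, both classical under Hypothesis~\ref{hyp02} thanks to the $C^2$ regularity of the cell solutions $\chi^i$ from Schauder estimates on $\mathbb T^2$. The main technical obstacle is that the passage from Stratonovich to It\^o generates an $O(\epsilon^{-1})$ drift whose limit is meaningful only because of the divergence-form structure exhibited above; the cell problem is precisely the PDE that centers this singular drift against $\pi$.
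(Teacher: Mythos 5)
Your proposal follows the same route as the paper: fix $\epsilon$, apply Corollary~\ref{th07} to pass $\mu\downarrow 0$ and $\delta\downarrow 0$ and obtain the Stratonovich equation for $\hat q^{\epsilon}_t$, convert to It\^o form (producing the $O(\epsilon^{-1})$ centered drift), and then invoke classical periodic homogenization with invariant density $m=\alpha/\int_{\mathbb{T}^2}\alpha$. The only difference is one of exposition --- the paper delegates the last step entirely to \cite[Theorem 6.1, Chapter 3]{PBL01}, whereas you unpack the two-scale expansion, cell problem, and martingale argument explicitly --- so the argument is correct and essentially identical.
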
 

\begin{proof} 
By Corollary~\ref{th07}, as $\mu \downarrow 0$ first and $\delta \downarrow 0$, $q^{\mu,\delta,\epsilon}_t \rightarrow \hat{q}^{\epsilon}_t$ in probability in $C([0,T];\RR^2)$, where $\hat{q}^{\epsilon}_t$ solves
\begin{equation}\label{eqnn01}
\left\{ \begin{array}{ll} \dot{\hat{q}}^{\epsilon}_t=-\frac{1}{\alpha \left({\frac{\hat{q}^{\epsilon}_t}{\epsilon}}\right)}A_0^{-1} b
(\hat{q}^{\epsilon}_t)-\frac{1}{\alpha\left({\frac{\hat{q}^{\epsilon}_t}{\epsilon}}\right)}A_0^{-1} \circ \dot{w}_t
\\\\
\hat{q}^{\epsilon}_0=q_0  \in \mathbb{R}^2.
\end{array} \right.
\end{equation}

Considering 
\begin{equation*}
A_0^{-1}=-A_0
\end{equation*}
from the definition of $A_0$ in \eqref{neq01}, 
writing \eqref{eqnn01} in It\^{o} integral, we get 
\begin{equation*}
 \dot{\hat{q}}^{\epsilon}_t=\frac{1}{\alpha\left({\frac{\hat{q}^{\epsilon}_t}{\epsilon}}\right)}\tilde{b}
(\hat{q}^{\epsilon}_t)-\frac{1}{2\epsilon}\frac{\nabla \alpha\left({\frac{\hat{q}^{\epsilon}_t}{\epsilon}}\right)}{\alpha^3\left({\frac{\hat{q}^{\epsilon}_t}{\epsilon}}\right)}+\frac{1}{\alpha\left({\frac{\hat{q}^{\epsilon}_t}{\epsilon}}\right)}\dot{\tilde{w}}_t,
\end{equation*}
where 
\begin{equation*}
\tilde{b}(q):=A_0 b(q)
\end{equation*}
and
\begin{equation*}
\tilde{w}_t:=A_0w_t.
\end{equation*}

Note that $\tilde{w}_t$ is also a Wiener process in $\RR^2$.

Under Hypothesis~\ref{hyp02}, we can apply \cite[Theorem 6.1, Chapter 3]{PBL01} to $\hat{q}^{\epsilon}_t$.\\
The normalized solution $m(q)$ of the adjoint equation $L^* m(q)=0$ can be easily found as
\[
m(q)=\frac{1}{\int_{\mathbb{T}^2} \alpha(q) dq} \alpha(q)
\]
and the statement of the proposition follows.
\end{proof}

\bibliographystyle{plain}

\bibliography{Research14}

\end{document}